\newtheorem{theo}{Theorem}[section]
\newtheorem{lemma}[theo]{Lemma}
\newtheorem{cor}[theo]{Corollary}
\newtheorem{prop}[theo]{Proposition}
\newtheorem{defi}[theo]{Definition}
\newtheorem{remark}[theo]{Remark}
\newcommand{\bC}{{\mathbb{C}}}
\newcommand{\bN}{{\mathbb{N}}}
\newcommand{\bR}{{\mathbb{R}}}
\newcommand{\bS}{{\mathbb{S}}}
\def\e{{\epsilon}}
\def\g{{\gamma}}
\def\a{{\alpha}}
\def\b{{\beta}}
\newcommand{\fgbar}{f\bar{g}}
\newcommand{\B}{\textbf{B}}
\newcommand{\BS}{\textbf{S}}
\newcommand{\BD}{\textbf{D}}
\newcommand{\s}{\vspace{0,3cm}}
\begin{document}
\title[THE BOUNDARY OF THE MILNOR FIBRE]{THE DEGENERATION OF THE BOUNDARY OF THE MILNOR FIBRE TO THE LINK OF COMPLEX AND REAL NON-ISOLATED SINGULARITIES}
\author{Aur\'elio Menegon Neto}
\author{Jos\'e Seade}

\thanks{Partial support from CONACYT and UNAM (Mexico), and FAPESP (Brazil).}
\keywords{Milnor fibre, boundary, vanishing zone, L\^e polyhedron, non-isolated singularity, real singularity, collapse}
\subjclass[2000]{Primary 14J17, 14B05, 32S05, 32S25, 32S30, 32S45. Secondary 14P15, 32C05.}
\address{Aur\'elio Menegon Neto: Instituto de Ci\^encias Matem\'aticas e de Computa\c c\~ao, Universidade de S\~ao Paulo, S\~ao Carlos - Brazil.}
\address{Jos\'e Seade: Instituto de Matem\'aticas, Unidad Cuernavaca, Universidad Nacional Aut\'onoma de M\'exico}
\email{aurelio@icmc.usp.br}
\email{jseade@matcuer.unam.mx}

\begin{abstract}
We study the boundary of the Milnor fibre of real analytic singularities $f: (\bR^m,0) \to (\bR^k,0)$, $m\geq k$, with an isolated critical value and the Thom $a_f$-property. We define the vanishing zone for $f$ and we give necessary and sufficient conditions for it to be a fibre bundle over the link of the singular set of $f^{-1}(0)$. In the case of  singularities  of the type $\fgbar: (\bC^n,0) \to (\bC,0)$ with an isoalted critical value, $f, g$ holomorphic, we further describe the degeneration of the boundary of the Milnor fibre to the link of $\fgbar$.
As a milestone, we also construct a L\^e's polyhedron for real analytic singularities of the type $\fgbar: (\bC^2,0) \to (\bC,0)$ such that either $f$ or $g$ depends only on one variable.

\end{abstract}
\maketitle

\section*{Introduction}

A fundamental problem in mathematics is understanding the way how the non-critical levels $F_t := f^{-1}(t) \cap \B_\e$ of an analytic map-germ
$(\bR^m,0) \buildrel{f} \over {\to} (\bR^k,0)$, $m \ge k$, degenerate to the special fibre $F_0:= f^{-1}(0) \cap \B_\e$. For instance, when $f$ is holomorphic, the celebrated Milnor's fibration theorem says that the  $F_t $ form a locally trivial fibration over a punctured disc. And a  remarkable theorem of  L\^e D\~ung Tr\'ang \cite{Le1} says that when $f$ further has an isolated critical point, inside each  $F_t$ one has a polyhedron $P_t$ of middle dimension which ``collapses" as we approach the special fiber, and the complement $F_t \setminus P_t$ is diffeomorphic to $F_0 \setminus \{0\}$.  If $f$ is holomorphic with non-isolated singularities, one still has Milnor's fibration theorem and there is a rich theory, developed by D. Massey and many others, explaining how the fibers $F_t $ degenerate into the special fiber $F_0$.

There is an alternative viewpoint to this problem, which arises from work by Siersma \cite{Si}, Michel-Pichon \cite{MP, MP2} and Nemethi-Szilard \cite{NS} (see also \cite {FM}). For this we recall that given a real analytic map-germ $f$ as above one has the link of $f$, that we denote by $L_0$, which by definition is $L_0:= f^{-1}(0) \cap \bS_\e$,  the intersection with a sufficiently small sphere.  The link can be regarded as being the boundary of $F_0$, and it  is non-singular if and only if $f$ has no other critical points near $0$. In that case $L_0$ is isotopic to the boundary $L_t$ of the Milnor fiber $F_t$, and  this crucial fact was essential in the classical work of Milnor, Brieskorn, Hirzebruck and others, to study the topology of the link $L_0$.

In general the link $L_0$ is a real analytic singular variety, whose homeomorphism type is independent of all choices and determines the topology of $V_0$. The boundary of the Milnor fibre is a smooth manifold with a rich topology (see for instance \cite {MP, FM, NS}), and these manifolds $L_t$ degenerate into the link $L_0$. In this article we study this degeneration  $L_t \to L_0$.
 
We recall that the links of isolated singularities in complex analytic spaces have proved to be a rich family of manifolds to be considered for various problems in geometry and topology. The setting we envisage here provides a larger class of manifolds, which still come equipped with a rich geometry and topology.

We first prove (Theorem \ref {theo_c4}):

\vskip.2cm

\noindent
{\bf Theorem.} {\it 
Let $f: (\bR^m,0) \to (\bR^k,0)$, with $m\geq k$, be a real analytic map-germ such that $0 \in \bR^k$ is an isolated critical value and $f$ has the Thom $a_f$-property. Let $\Sigma$ be the singular set of $V:= f^{-1}(0) \cap \B_\e$ and let $L(\Sigma):= \Sigma \cap \bS_\e$ be its link. Then:

\vskip.1cm
\noindent {\bf $(a)$}  There exists a compact  regular neighbourhood $W$ of $L(\Sigma)$ in $\BS_\e$ with smooth boundary, such that for all $t$ with $\Vert t \Vert$ sufficiently small, one has that $L_t \backslash \mathring{W}$ is homeomorphic to $L_0 \backslash \mathring{W}$, where $ \mathring{W}$ is the interior of $W$. 

\vskip.1cm

\noindent {\bf $(b)$} If we suppose that $\Sigma$ has at most an isolated singularity,  then $W$  can be chosen to be a fibre bundle over $L(\Sigma)$ with fibre a disk.  

\vskip.1cm

\noindent {\bf $(c)$}  In this latter case one has that the intersection $W_t:= L_t \cap W_t$ is a fibre bundle over $L(\Sigma)$ if and only if the intersection $W_0:= L_0 \cap W_t$ is a fibre bundle over $L(\Sigma)$, and this happens if and only if  $\Sigma \backslash \{0\}$ is a stratum of a Whitney stratification of $f$.}

\vskip.2cm

We remark that if $f$ is as above, then it has a Milnor-L\^e fibration (see for instance \cite {PS}):
$$f_|: (f)^{-1}(\BD_\eta^*) \cap \B_\e \to \BD_\eta^*\,,$$
where $\B_\e$ denotes the closed ball in $\bR^m$ centered at $0$ and with radius $\e$, $\BD_\eta$ denotes the closed ball in $\bR^k$ centered at $0$ and with radius $\eta$ and $\BD_\eta^*$ denotes the punctured disk $\BD_\eta \backslash \{0\}$.

Following \cite{Si, MP} we call a manifold $W$ as in the theorem above a {\it vanishing zone} for $f$.  So the idea is that outside the vanishing zone $W$, the link $L_0$ is smooth and diffeomorphic to that part of the boundary of the Milnor fibre which lies outside $W$. Then one must focus on what happens inside $W$. This is interesting for two reasons. On the one hand the boundary of the Milnor manifold, being a smooth manifold, can be easier to handle than the link, and understanding the way how $L_t$ degenerates into $L_0$ throws light into the topology of the link, just as the study of the vanishing cycles on the Milnor fiber throws light into the topology of the special fiber. On the other hand we can argue conversely, and understanding the degeneration $L_t \to L_0$ allows us to re-construct  $L_t$ out from $V:= f^{-1}(0) \cap \B_\e$ itself. For instance, this was the approach followed in \cite{MP, NS, FM} to show that in the cases under consideration in those articles, the boundary $L_t$ is a Waldhausen manifold.

In the second part of this article we concentrate on analytic functions of the form: $$\fgbar: (\bC^n,0) \to (\bC,0)\,,$$ 
where  $f,g: (\bC^n,0) \to (\bC,0)$ are  holomorphic  functions such that the real analytic map-germ $\fgbar$ 
has an isolated critical value at $0 \in \bC$. In \cite{PS2} Pichon and Seade proved that such a map-germ has the Thom $a_f$-property and hence there exist sufficient small positive reals $0 < \eta << \e$ such that the restriction 
$$\fgbar_|: (\fgbar)^{-1}(\BD_\eta^*) \cap \B_\e \to \BD_\eta^*$$
is a locally trivial fibration. 

For this we first extend  (in  section \ref{section_LP-real}) L\^e's  construction in \cite{Le1} of a ``vanishing polyhedron", to the case of map-germs of the type $\fgbar: (\bC^2,0) \to (\bC,0)$ with an isolated critical point. And  then, using the results of sections \ref{section_BMF} and \ref{section_LP-real}, we  study in section \ref{section_degeneration}  the degeneration of the boundary of the Milnor fibre to the link, for real analytic map-germs of the type $\fgbar: (\bC^n,0) \to (\bC,0)$ satisfying some hypothesis.

Notice that if $g$ is constant, then $\fgbar$ is a holomorphic map-germ.
In \cite{MP},  F. Michel and A. Pichon describe very precisely a way of constructing a vanishing zone and the boundary of the Milnor fibre for holomorphic map-germs $(\bC^3,0) \to (\bC,0)$. This was done also in \cite{FM} in a different way, somehow inspired by Nemethi-Szilard's work \cite{NS}, and that construction works also for map germs $\fgbar: (\bC^3,0) \to (\bC,0)$. The constructions  we give in this article do not give so much information about the boundary of the Milnor as those in \cite{Si, MP, FM}, but they have the advantage of applying in  very general settings, thus throwing light on the topology of real analytic map-germs with non-isolated singularities.

The authors are grateful to M. A. S. Ruas for helpful discussions.

\vskip.5cm
\section{The boundary of the Milnor fibre of non-isolated singularities}
\label{section_BMF}

Consider now  a real analytic map germ 
$$f: (\bR^m,0) \to (\bR^k,0)\;, \quad m \geq k\,,$$
 such that $0 \in \bR^k$ is an isolated critical value and $f$ has the Thom $a_f$-property, so $f$ has a local Milnor-L\^e fibration. Set $V:= f^{-1}(0)$ and let 
$L_0:= V \cap \BS_\e$ be 
the link of  $f$. We denote by $L(\Sigma) := \Sigma \cap \BS_\e$
the link of the singular set of $V$. By $L_t$ we denote the boundary of a Milnor fibre $F_t$, so $L_t := f^{-1}(t) \cap \BS_\e$ for $t$ such that $0 \ll |t| \ll \e$. Then $L_t$ is a smooth submanifold of the sphere $\BS_\e$ and as $ |t|$ tends to $0$, this manifold degenerates to the link $L_0$. 
Following \cite{Si} and \cite{MP} (also \cite {FM}), in this section we look at the way how $L_t$ degenerates into $L_0$.

\subsection{The vanishing zone for real analytic non-isolated singularities} $\,$
\medskip

We have: 

\begin{defi}
A {\it vanishing zone} for $L_0$ is a regular neighborhood  $W$ of $L(\Sigma)$ in $\BS_\e$ with smooth boundary, such that for all $t$ sufficiently near $0$, the smooth manifold $L_t \backslash \mathring{W}$ is diffeomorphic to $L_0 \backslash \mathring{W}$.
\end{defi}

We shall construct a vanishing zone for every real analytic map-germ $f$ as above, using the idea of a {\it cellular tube} of a submanifold $S$ of a manifold $M$, defined by Brasselet in \cite{Br}. This construction  is classical in PL-topology and generalizes the concept of  tubular neighborhoods. 

\begin{lemma} \label{vz}
There exists a compact regular neighbourhood $W$ of $L(\Sigma)$ in $\BS_\e$ such that its boundary $\partial W$ is smooth and it intersects $L_0$ transversally. Moreover, if $\Sigma$ is either a smooth manifold or it has an isolated singularity, then $W$ is a fibre bundle over $L(\Sigma)$ with fibre an $(m-r)$-dimensional disk in $\BS_\e$, where $r$ is the dimension of $\Sigma$.
\end{lemma}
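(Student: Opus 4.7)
The plan is to build $W$ as a regular neighbourhood of $L(\Sigma)$ in $\BS_\e$ using the cellular tube construction of Brasselet for stratified subspaces, and then use a generic-parameter argument to make its boundary simultaneously smooth and transverse to $L_0$.

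First, I would fix a Whitney stratification of $V = f^{-1}(0)$ having $\Sigma$ as a union of strata. By the conic structure theorem applied to real analytic sets, for $\e$ small enough the sphere $\BS_\e$ is transverse to every stratum of $V$, so that $L(\Sigma)$ is a closed stratified subspace of $L_0$ and $L_0 \setminus L(\Sigma)$ is a smooth submanifold of $\BS_\e$. Applying Brasselet's construction \cite{Br} to the inclusion $L(\Sigma) \hookrightarrow \BS_\e$ produces a compact PL-regular neighbourhood of $L(\Sigma)$ equipped with a continuous tubular function $\rho$, defined on an open neighbourhood of $L(\Sigma)$ in $\BS_\e$, vanishing exactly on $L(\Sigma)$ and smooth away from it. Set $W_\d := \{\rho \le \d\}$. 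After smoothing the corners of $\partial W_\d$ by a standard PL-to-smooth argument one obtains a compact neighbourhood with smooth boundary. Transversality of $\partial W_\d$ with $L_0$ is then ensured for almost every small $\d$: Sard's theorem applied to the restriction of $\rho$ to the smooth manifold $L_0 \setminus L(\Sigma)$ shows that generic small regular values $\d$ yield a boundary transverse to $L_0$. Choosing such a $\d$ small enough gives the desired $W$.

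For the \emph{moreover} part, suppose that $\Sigma$ is either smooth or has only an isolated singularity at $0$. Then $\Sigma \setminus \{0\}$ is a smooth $r$-dimensional real analytic manifold, and by the conic structure theorem it meets $\BS_\e$ transversally for small $\e$; hence $L(\Sigma)$ is itself a smooth closed submanifold of $\BS_\e$ of dimension $r-1$. The classical tubular neighbourhood theorem then provides $W$ as the normal disk bundle of $L(\Sigma)$ inside $\BS_\e$, whose fibre is a closed disk of dimension $(m-1)-(r-1)=m-r$. Shrinking the radius of the bundle and applying Sard once more ensures the transversality of $\partial W$ with $L_0$.

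The main obstacle is the general case, where $\Sigma$ is itself singular and hence $L(\Sigma)$ is only a stratified subset of $\BS_\e$: one cannot directly invoke the tubular neighbourhood theorem, and one must combine Brasselet's cellular tube, which is intrinsically PL and built stratum-by-stratum, with a corner-smoothing procedure that preserves the transversality of $\partial W$ with $L_0$. This coupling of smoothing and transversality is the technical heart of the argument, whereas in the smooth-singular-locus case both properties come essentially for free from the ordinary tubular neighbourhood theorem together with a Sard-type perturbation.
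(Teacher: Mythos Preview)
Your approach is correct but obtains transversality by a different mechanism than the paper. The paper triangulates $\BS_\e$ compatibly with a Whitney stratification in which $L_0$ is a union of strata, builds $W$ as the cellular tube (the union of cells dual to simplices lying in $L(\Sigma)$), and then observes that transversality of $\partial W$ with $L_0$ is automatic from the combinatorics: every simplex of the triangulation meets its dual cell transversally, and since $L_0$ is a union of simplices while $\partial W$ is a union of dual cells, the two are transverse by construction. Smoothing of $\partial W$ is done only afterwards, via Hirsch's theorem, so the ``coupling of smoothing and transversality'' you flag as the technical heart is simply sidestepped. Your Sard-based argument is more analytic and would adapt beyond the PL setting, but note one imprecision: the tubular function $\rho$ produced by Brasselet's construction is PL rather than smooth off $L(\Sigma)$, so you should first smooth $\rho$ on the smooth manifold $\BS_\e \setminus L(\Sigma)$ before invoking Sard. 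For the disk-bundle clause the paper stays with the cellular tube and cites \cite{BSS} for its bundle structure when $L(\Sigma)$ is a manifold, whereas you switch to the ordinary tubular neighbourhood theorem; both routes are valid since the lemma only asserts existence.
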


\begin{proof}
Let us consider a Whitney stratification of $\BS_\e$ such that $L_0$ is a union of strata. Now let us consider a triangulation $(K)$ of $\BS_\e$ such that each strata of the Whitney stratification is a union of simplices. Let $(K')$ be the barycentric decomposition of $(K)$. 
Using $(K')$ one constructs the associated cellular dual decomposition $(D)$ of $\BS_\e$: given a simplex $\sigma$ in $(K)$ of dimension
$s$, its dual $d(\sigma)$ is the union of all simplices $\tau$ in
$(K')$ whose closure meets $\sigma$ exactly at its barycenter
$\hat{\sigma}$, that is, 
$$\bar{\tau} \cap \sigma = \hat{\sigma}.$$
It is a cell of dimension $(m-s-1)$. Taking the union of all these dual cells we get the dual decomposition $(D)$ of $(K)$. By construction, each cell $\sigma$ intersects its dual $d(\sigma)$ transversally. 
We let $W$ be the union of cells in $(D)$ which are dual of simplices in $L(\Sigma)$; it provides a cellular tube around $L(\Sigma)$ in $\BS_\e$, which means it satisfies the following properties:

\begin{itemize}
\item[$(i)$] $W$ is a compact neighbourhood of $L(\Sigma)$ containing $L(\Sigma)$ in its interior, and $\partial W$ is a retract of $W \backslash L(\Sigma)$;
\item[$(ii)$] $W$ retracts to $L(\Sigma)$;
\item[$(iii)$] For any neighbourhood $U$ of $L(\Sigma)$ in $\BS_\e$, if the triangulation of $\BS_\e$ is sufficiently ``fine" then we can assume $W \subset U$.
\end{itemize}
Now consider the sets:
\begin{itemize}
\item[$\bullet$] $A := \{ \sigma \in (K) \, \big | \ \sigma$ is a $(1)$-simplex of $L_0$ whose closure intersects $\partial W \}$ \,;
\item[$\bullet$] $B := \{ \sigma \in (K')  \, \big |  \sigma$ is a $(m-2)$ simplex in $\partial W$ whose closure intersects $L_0 \} \,$.
\end{itemize}
Then one can see that 
$$B = \bigcup_{\sigma \in A} d(\sigma).$$
Therefore $\partial W$ intersects $L_0$ transversally. Moreover, if $\Sigma$ is either a smooth manifold or an isolated singularity, it follows that $L(\Sigma)$ is a smooth manifold without boundary, and then it follows from section 1.1.2 of \cite{BSS} that $W$ is a bundle on $L(\Sigma)$ whose fibres are disks. 
Finally, by a theorem of Hirsch (\cite{Hs}) we can assume that $\partial W$ is a smooth manifold.
\end{proof}

\begin{cor} \label{cor_1}
For $t$ sufficiently small, $L_t$ intersects $\partial W$ transversally. As a consequence, $\partial W_t := \partial W \cap L_t$ is a differentiable manifold.
\end{cor}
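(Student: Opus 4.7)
The plan is to use the Thom $a_f$-property to propagate the transversality $\partial W \pitchfork L_0$ obtained in Lemma \ref{vz} onto the nearby fibres $L_t$, and then to conclude on a uniform neighbourhood by compactness of $\partial W$. Since $L(\Sigma) \subset \mathring{W}$, the intersection $L_0 \cap \partial W$ avoids the singular locus of $V := f^{-1}(0) \cap \B_\e$, so every $x \in L_0 \cap \partial W$ lies in the smooth part $V^{\mathrm{reg}}$ and is a non-critical point of $f$; in particular one already has $T_x L_0 + T_x \partial W = T_x \bS_\e$.

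I would then argue by contradiction: suppose there exist $t_n \to 0$ and $y_n \in L_{t_n} \cap \partial W$ at which $L_{t_n}$ fails to be transverse to $\partial W$ in $\bS_\e$. Compactness of $\partial W$ lets us pass to a subsequence with $y_n \to y$, and $f(y_n) = t_n \to 0$ forces $y \in L_0 \cap \partial W \subset V^{\mathrm{reg}}$. Applying the Thom $a_f$-property to a Whitney stratification realising it, we may assume after a further subsequence that $T_{y_n} f^{-1}(t_n) \to T_y V^{\mathrm{reg}}$ in the appropriate Grassmannian. The Milnor condition (transversality of $\bS_\e$ to the strata near $0$) then forces the convergence of the intersections $T_{y_n} L_{t_n} = T_{y_n} f^{-1}(t_n) \cap T_{y_n} \bS_\e \longrightarrow T_y V^{\mathrm{reg}} \cap T_y \bS_\e = T_y L_0$. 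Combining this with the smooth convergence $T_{y_n} \partial W \to T_y \partial W$ and passing to a final subsequence on which $\dim\bigl(T_{y_n} L_{t_n} + T_{y_n} \partial W\bigr)$ is constant and at most $m-2$ (the failure of transversality), the limiting sum $T_y L_0 + T_y \partial W$ would then have dimension at most $m-2$, contradicting the transversality at $y$ from Lemma \ref{vz}.

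Hence $L_t \pitchfork \partial W$ in $\bS_\e$ for all $t$ sufficiently near $0$, and the regular-value theorem (applied, for instance, to a local defining function of $\partial W$ in $\bS_\e$ restricted to $L_t$) shows that $\partial W_t := L_t \cap \partial W$ is a differentiable manifold. The main obstacle is the tangent-plane convergence $T_{y_n} L_{t_n} \to T_y L_0$: without the Thom $a_f$-property one has no control on the limits of $T_{y_n} f^{-1}(t_n)$ at a point of $V$, and without Milnor's transversality of the sphere to the strata the dimension of the intersection with $T \bS_\e$ could collapse in the limit. Once those two ingredients are in place, the remainder of the argument is routine bookkeeping in the Grassmannian.
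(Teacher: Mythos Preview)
Your proof is correct and follows essentially the same contradiction argument as the paper: extract a convergent subsequence by compactness, use the Thom $a_f$-property to control the limiting tangent plane, and then invoke the openness of transversality (equivalently, lower semicontinuity of $\dim(A+B)$) to contradict Lemma~\ref{vz}. You are in fact slightly more explicit than the paper in separating the Thom condition on the full fibres $f^{-1}(t)$ from the induced statement on $L_t = f^{-1}(t)\cap\bS_\e$, and correctly identify Milnor's transversality of $\bS_\e$ with the fibres as the ingredient that makes the intersection pass to the limit.
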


\begin{proof} 
Suppose this is not true, that is, suppose that there exists a sequence of points $(p_t)$ in $f^{-1}(\B_\eta^*) \cap \partial W$, with $p_t \in L_t \cap \partial W$, which converges to $p_0 \in L_0 \cap \partial W$, such that $T_{p_t} L_t$ intersects $T_{p_t} \partial W$ not transversally, for each $p_t$. Set 
$$T := \lim_{t \to 0} \ T_{p_t} L_t$$
and let $Reg(L_0)$ denote the regular part of $L_0$. Since $f$ has the Thom $a_f$-property, it follows that $T_{p_0} (Reg L_0) \subset T$, and since $T_{p_0}(Reg L_0) \pitchfork T_{p_0} \partial W$ by the previous lemma, it follows that $T \pitchfork T_{p_0} \partial W$, {\it i.e.}, $T$ and $T_{p_0}$ meet transversally. Consider $d$ a metric in the corresponding  Grassmannian. Since transversality is an open property,  we have that if $d(T, T_{p_t} L_t)$ and $d(T_{p_0} \partial W, T_{p_t} \partial W)$ are sufficiently small, then $T_{p_t} L_t \pitchfork T_{p_t} \partial W$, which is a contradiction. 
Therefore, if $\eta$ is sufficiently small, one has that $T_{p_t}L_t \pitchfork T_{p_t} \partial W$, for any $t \in \BD_\eta$ and $p_t \in (L_t \cap \partial W)$.
\end{proof}

\begin{lemma} \label{lemma_vz}
For any $t \in \B_\eta$, the smooth manifold $L_t \backslash \mathring{W}$ is diffeomorphic to $L_0 \backslash \mathring{W}$.
\end{lemma}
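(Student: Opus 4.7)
The strategy is to apply Ehresmann's fibration theorem for manifolds with boundary to the restriction
$$\Phi := f|\colon \bigl(\BS_\e \setminus \mathring{W}\bigr) \cap f^{-1}(\BD_\eta) \longrightarrow \BD_\eta\,.$$
The domain is a closed subset of the compact sphere $\BS_\e$, so $\Phi$ is automatically proper. What must be verified, after possibly shrinking $\eta$, is that $\Phi$ is a submersion on the interior of its domain and that its further restriction $\Phi|_{\partial W \cap f^{-1}(\BD_\eta)}$ is also a submersion. Once these two conditions hold, Ehresmann's theorem produces a smooth trivialization of $\Phi$ over $\BD_\eta$ that is compatible with the boundary stratum, and restricting to the fibres over $0$ and $t$ yields the desired diffeomorphism $L_0 \setminus \mathring{W} \cong L_t \setminus \mathring{W}$.

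For interior submersivity, over $\BD_\eta^*$ this is immediate from the hypothesis that $f$ admits a Milnor--L\^e fibration. The delicate part is at points $p \in L_0 \setminus \mathring{W}$. Since $W$ contains $L(\Sigma)$ in its interior, every such $p$ is a regular point of $V$, and the Milnor condition for small $\e$ guarantees that $\BS_\e$ meets $V$ transversally at $p$, which is equivalent to $f|_{\BS_\e}$ being a submersion at $p$. To extend this to a uniform neighborhood of $L_0 \setminus \mathring{W}$, I would repeat the compactness-plus-Thom argument used in the proof of Corollary \ref{cor_1}: if there were points $p_n \in L_{t_n} \setminus \mathring{W}$ converging to $p_0 \in L_0 \setminus \mathring{W}$ at which $f|_{\BS_\e}$ failed to be a submersion, then, extracting a limit $T_{p_n} L_{t_n} \to T$ in the appropriate Grassmannian, the Thom $a_f$-property would force $T_{p_0}(L_0) \subset T$; but $T_{p_0}(L_0)$ is already transversal to $T_{p_0}\partial W$ inside $T_{p_0}\BS_\e$, and openness of transversality would then contradict the failure of submersivity at $p_n$.

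Submersivity along the boundary stratum is a direct consequence of Corollary \ref{cor_1}: the transversality of $L_t$ and $\partial W$ inside $\BS_\e$ for all $t$ with $\Vert t \Vert$ small is exactly the statement that $f|_{\partial W}$ is a submersion at every point of $\partial W \cap L_t$. By openness of submersivity together with the compactness of $L_0 \cap \partial W$, one obtains, after shrinking $\eta$ if needed, that $\Phi|_{\partial W \cap f^{-1}(\BD_\eta)}$ is a submersion onto $\BD_\eta$.

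The main obstacle I expect is the uniform control of submersivity on a full neighborhood of $L_0 \setminus \mathring{W}$, for which the Thom $a_f$-property is essential in order to rule out tangent planes of the fibres $L_t$ degenerating as $t \to 0$. Once properness, interior submersivity, and boundary submersivity are all in place, Ehresmann's theorem for manifolds with boundary concludes the argument and, in particular, identifies $W$ as a vanishing zone in the sense of the preceding definition.
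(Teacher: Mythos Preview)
Your approach is essentially identical to the paper's: apply Ehresmann's fibration theorem for manifolds with boundary to $f$ restricted to $M=(\BS_\e \setminus \mathring{W}) \cap f^{-1}(\BD_\eta)$, verifying submersivity in the interior via Milnor theory and on the boundary $\partial W$ via Corollary~\ref{cor_1}.

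One slip worth correcting: in your second paragraph, the contradiction you derive invokes ``$T_{p_0}(L_0)$ transversal to $T_{p_0}\partial W$,'' but that is the \emph{boundary} transversality from Corollary~\ref{cor_1}, not what is at stake for interior submersivity. The relevant statement there is that the fibre $f^{-1}(t_n)$ is transversal to $\BS_\e$ in the ambient space (equivalently, $f|_{\BS_\e}$ is a submersion at $p_n$), and the limiting object should be $T_{p_n}f^{-1}(t_n)$, with the Thom property giving $T_{p_0}V\subset T$ and the Milnor condition giving $V\pitchfork\BS_\e$ at $p_0$. In fact this entire auxiliary Thom argument is unnecessary: the transversality of \emph{all} fibres $f^{-1}(t)$, $|t|\le\eta$, to $\BS_\e$ is precisely what the standard choice of a Milnor radius $\e$ followed by $\eta\ll\e$ already provides, which is why the paper simply writes ``$(1')$ follows from Milnor theory.''
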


\begin{proof}
Set $M = \BS_\e \cap f^{-1}(\B_\eta) \backslash \mathring{W}$. By Ehresmann's fibration lemma \index{Ehresmann's fibration lemma} for manifolds with boundary, $f_{|_M}: \BS_\e \cap f^{-1}(\B_\eta) \backslash \mathring{W} \to \B_\eta$ is a fibre bundle
if one has the following conditions:
\begin{itemize}
\item[(1)] $\forall p \in M$, $D(f_{|_M})_p: T_p(\BS_\e)
\to T_{f(p)}\bC$ is a surjection;
\item[(2)] $\forall p \in \partial M$, $D(f_{|_{\partial M}})_p:
T_p(\partial M) \to T_{f(p)}\bC$ is a surjection.
\end{itemize}
These conditions are equivalent to the following:
\begin{itemize}
\item[(1')] the fibres of $f$ are transversal to $\BS_\e$;
\item[(2')] the fibres of $f$ are transversal to $\partial M$.
\end{itemize}
But $(1')$ follows from Milnor theory (\cite{Mi}) and $(2')$ follows from Corollary \ref{cor_1}. 
\end{proof}

\subsection{The topology of the vanishing zone}
\label{section_TVZ}

From now on we suppose that $\Sigma$ is either a smooth manifold or it has an isolated singularity, of dimension $r$, with $0<r\leq m-2$. We want to describe the topology of $L_0$ inside $W_0 := W \cap L_0$, and the topology of $L_t$ inside $W_t := W \cap L_t$. By construction, we know that $W$ is a fibre bundle over $L(\Sigma)$, with projection $p$ and fibre the $(m-r)$-dimensional ball $\B(s)$, centered at $s \in L(\Sigma)$.

$$\xymatrix{ 
\B(s) \ \ar@{^{(}->}[r] & W \ar[d]^{p} \\ 
	&	L(\Sigma) \;.
} 
$$
For each $s \in L(\Sigma)$, consider the restriction
$$f_s: \B(s) \to \bR^m.$$
Clearly $f_s^{-1}(0)$ is a compact real analytic variety in $\B(s)$, for any $s \in L(\Sigma)$. Moreover, by Lemma \ref{vz}, it has an isolated singularity at $0 \in \B(s)$. Also, for any $t \in \B_\eta^*$ one has that $f_s^{-1}(t)$ is a smooth manifold of dimension $m-r-1$, since $t$ is a regular value of $f_s$.

\begin{lemma} \label{lemma_Wt}
There exists $\delta$ sufficiently small such that if $W$ is taken to be a fibre bundle with fibre the ball $\B_\delta(s)$ centered at $s \in L(\Sigma)$, then $\mathring{W_t}:= \mathring{W} \cap L_t$ intersects $\B_\delta(s)$ transversally, for any $s \in L(\Sigma)$ and $t \in \B_\eta$.
\end{lemma}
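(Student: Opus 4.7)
The plan is to argue by contradiction via a Grassmannian-limit argument, in the spirit of Corollary \ref{cor_1}. If the conclusion were to fail one could extract sequences $\delta_n \to 0$, $s_n \in L(\Sigma)$, $t_n \in \B_\eta$ and $p_n \in L_{t_n} \cap \mathring{\B_{\delta_n}(s_n)}$ such that $T_{p_n} L_{t_n}$ is not transversal to $T_{p_n}\B_{\delta_n}(s_n)$ inside $T_{p_n}\BS_\e$. Compactness of $L(\Sigma)$ together with the estimate $\|p_n - s_n\| \le \delta_n \to 0$ lets one pass to a subsequence with $s_n, p_n \to s_0 \in L(\Sigma)$, after which continuity of $f$ forces $t_n = f(p_n) \to 0$. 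A final extraction makes the two families of tangent planes converge in the relevant Grassmannians to limits $T_L$ and $T_B$ based at $s_0$.

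To control $T_L$ I would invoke the Thom $a_f$-property relative to a Whitney stratification adapted to $f$ in which $\Sigma \setminus \{0\}$ (smooth by hypothesis) is a union of strata. Since $p_n$ lies in the top-dimensional stratum, where $f$ is a submersion, the $a_f$-condition applied along $p_n \to s_0$ gives $\lim T_{p_n} f^{-1}(t_n) \supset T_{s_0} \Sigma$. Milnor's condition that $\BS_\e$ is transversal to $f^{-1}(t)$ for small $t$ then lets one intersect with $T_{s_0}\BS_\e$ and conclude $T_L \supset T_{s_0} L(\Sigma)$.

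To control $T_B$ I would use the smooth disk-bundle structure of $W$ produced by Lemma \ref{vz}: the fibre over $s$ meets $L(\Sigma)$ only at $s$ and, by dimension count, transversally, so its tangent space at $s$ is a direct-sum complement of $T_s L(\Sigma)$ inside $T_s \BS_\e$. Working in a smooth local trivialization $W|_U \simeq U \times D^{m-r}$ around $s_0$, the fibre tangent planes depend continuously on the base point and on the fibre coordinate; as $(s_n, p_n) \to (s_0, s_0)$ with the fibre coordinate shrinking to the centre, $T_B$ is forced to be the tangent plane at $s_0$ to the fibre over $s_0$, whence $T_B \oplus T_{s_0} L(\Sigma) = T_{s_0}\BS_\e$.

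Combining the two gives $T_L + T_B \supset T_{s_0}L(\Sigma) + T_B = T_{s_0}\BS_\e$, so $T_L \pitchfork T_B$. Openness of transversality in the product of Grassmannians then forces $T_{p_n} L_{t_n} \pitchfork T_{p_n}\B_{\delta_n}(s_n)$ for all sufficiently large $n$, contradicting the choice of the sequences. The main technical obstacle I anticipate is the identification of $T_B$: one must verify that the Hirsch-smoothing of the combinatorial cellular tube of Lemma \ref{vz} really produces a smooth bundle whose fibre tangent planes converge, as $\delta \to 0$, to a flat ``normal'' disk to $L(\Sigma)$ in $\BS_\e$ that is complementary to $T_{s_0} L(\Sigma)$. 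This passage from the dual-cell description to the differentiable statement needed is the delicate point on which the proof hinges.
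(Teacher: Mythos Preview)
Your strategy is essentially the paper's, repackaged as a sequential contradiction in the style of Corollary~\ref{cor_1}: both arguments show that near $L(\Sigma)$ the tangent space to $L_t$ (approximately) contains $T_{s_0}L(\Sigma)$, while the fibre $\B_\delta(s_0)$ of the disk bundle is complementary to it in $T_{s_0}\BS_\e$. The paper treats $t=0$ first via Whitney's condition~$(a)$ and then reduces $t\neq 0$ to that case via the Thom property; you unify the two using $a_f$ directly. So the organisation differs but the underlying content does not.

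There is, however, one genuine gap, and it is not where you expect. Your claim that the $a_f$-condition ``applied along $p_n\to s_0$ gives $\lim T_{p_n}f^{-1}(t_n)\supset T_{s_0}\Sigma$'' is only justified when the stratum $S'$ containing $s_0$ is open in $\Sigma\setminus\{0\}$: Thom regularity for the pair $(\text{top stratum},S')$ yields only $\lim\supset T_{s_0}S'$, and when $\dim S'<r$ this is strictly smaller than $T_{s_0}\Sigma$, which is not enough to force transversality with the complementary disk. You yourself allow $\Sigma\setminus\{0\}$ to be merely a \emph{union} of strata, so this situation can occur; indeed, whether one may take $\Sigma\setminus\{0\}$ as a single Whitney stratum is precisely condition~$(iii)$ of Proposition~\ref{theo_BWS}, which is \emph{not} assumed in the present lemma. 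The paper meets exactly this difficulty in its ``Case~II'': it passes to a nearby $s'\in L(\Sigma)$ lying in a top-dimensional stratum, applies Whitney~$(a)$ there, and then uses that $\B_\delta(s_0)$ and $\B_\delta(s')$ have nearly equal tangent distributions to transfer transversality back to $s_0$. Your argument needs an analogous step. By contrast, your closing worry about identifying $T_B$ is not the crux: the paper likewise simply takes as given that each fibre $\B_\delta(s)$ meets $L(\Sigma)$ transversally at its centre and proceeds from there.
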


\begin{proof} Assume first that  $t=0$. 
We know that $L_0 \supseteq L(\Sigma)$ and that $L(\Sigma)$ intersects $\B_\delta(s)$ transversally for any $s \in L(\Sigma)$, that is, 
$$T_s(L(\Sigma)) + T_s(\B_\delta(s)) = T_s(\BS_\e).$$
We want to show that $T_y(\mathring{W_0}) + T_y(\B_\delta(s)) = T_y(\BS_\e)$ for any $y \in [\mathring{W_0} \cap \B_\delta(s)]$. Let $(S_\a)$ be a Whitney stratification of $W_0$ induced by a Whitney stratification of $f$. There are two possibilities:

{\bf Case I:} Suppose that $s$ belongs to a stratum of dimension $r-1$. Then by the $(a)$-condition of the Whitney stratification, we have that for $y$ sufficiently close to $s$ the tangent space $T_y(Reg L_0)$ is very close to a plane $P$ that contains $T_s(L(\Sigma))$ and so it is transversal to $\B_\delta(s)$. Since transversality is an open property, it follows that $\mathring{W_0}$ intersects $\B_\delta(s)$ in $y$ transversally. So if $\delta$ is sufficiently small, we have that $\mathring{W_0}$ intersects $\B_\delta(s)$ transversally.

{\bf Case II:} Suppose that $s$ belongs to a stratum of dimension $< r-1$. We can consider $s' \in L(\Sigma)$ as close to $s$ as we want such that $s'$ belongs to a stratum of dimension $r-1$. Then for $y \in \B_\delta(s)$ sufficiently close to $s$, the tangent $T_y(Reg L_0)$ is very close to a plane $P$ that contains $T_{s'}(L(\Sigma))$ and so it is transversal to $\B_\delta(s')$. But since $\B_\delta(s)$ is very close to $\B_\delta(s')$, it follows that $P$ is transversal to $\B_\delta(s)$. Therefore $\mathring{W_0}$ intersects $\B_\delta(s)$ in $y$ transversally. So if $\delta$ is sufficiently small, we have that $\mathring{W_0}$ intersects $\B_\delta(s)$ transversally.

If we take $\delta$ sufficiently small such that $\mathring{W_0}$ intersects $\B_\delta(s)$ transversally, then clearly $\mathring{W_0}$ intersects $\B_\delta(s')$ transversally, for any $s' \in L(\Sigma)$ sufficiently close to $s$. Since $L(\Sigma)$ is compact, we are done.

\medskip

Now consider the case $t \neq 0$ and let $\delta$ be as above. We know that 
$$T_{y'}(\mathring{W_0}) + T_{y'}(\B_\delta(s)) = T_{y'}(\BS_\e),$$
for any $y' \in [\mathring{W_0} \cap \B_\delta(s)]$. We want to show that 
$$T_{y}(\mathring{W_t}) + T_{y}(\B_\delta(s)) = T_{y}(\BS_\e),$$
for any $y \in [\mathring{W_t} \cap \B_\delta(s)]$. Since $f$ has the Thom $a_f$-property, we know that for $t>0$ sufficiently small, $T_y(\mathring{W_t})$ is very close to a plane $P$ that contains $T_{y'}(Reg(L_0))$, for some $y' \in Reg(L_0) \cap \B_\delta(s)$ very close to $y$. But then $P$ intersects $T_{y'}(\B_\delta(s))$ transversally and therefore $T_y(\mathring{W_t})$ intersects $T_y(\B_\delta(s))$ transversally.
\end{proof}

\begin{prop} \label{equiv}
The following conditions are equivalent:

\begin{itemize}
\item[$(i)$] $W_0$ is a fibre bundle over $L(\Sigma)$ with projection $p_0 = p_|: W_0 \to L(\Sigma)$ and fibre $f_s^{-1}(0)$;
\item[$(ii)$] $\partial W_0 = \partial W \cap L_0$ is a fibre bundle over $L(\Sigma)$ with projection $p_|: \partial W_0 \to L(\Sigma)$ and fibre $\partial \big( f_s^{-1}(0) \big)$, the link of $f_s$;
\item[$(iii)$] $\partial W_t = \partial W \cap L_t$ is a fibre bundle over $L(\Sigma)$ with projection $p_|: \partial W_t \to L(\Sigma)$ and fibre $\partial \big( f_s^{-1}(t) \big)$, the boundary of the Milnor fibre of $f_s$;
\item[$(iv)$] $W_t$ is a fibre bundle over $L(\Sigma)$ with projection $p_t = p_|: W_t \to L(\Sigma)$ and fibre $f_s^{-1}(t)$.
\end{itemize}
\end{prop}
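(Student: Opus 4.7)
The plan is to establish the cycle of equivalences $(i) \Leftrightarrow (ii)$, $(iii) \Leftrightarrow (iv)$, and $(ii) \Leftrightarrow (iii)$. The implications $(i) \Rightarrow (ii)$ and $(iv) \Rightarrow (iii)$ are immediate by restriction: any local trivialization $\varphi: p^{-1}(U) \cap W_0 \to U \times f_s^{-1}(0)$ restricts on the sphere fibres of the disk bundle $W \to L(\Sigma)$ to a local trivialization of $\partial W_0 \to L(\Sigma)$, and the same argument handles $(iv) \Rightarrow (iii)$.

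For $(ii) \Rightarrow (i)$ I would use the conical structure of each fibre $V \cap \B_\delta(s)$. Since $\Sigma \cap \B_\delta(s) = \{s\}$ by the transversality built into Lemma \ref{vz}, each such fibre has an isolated singularity at its centre $s$ and is homeomorphic to the topological cone over its link $\partial f_s^{-1}(0)$. The disk bundle $W$ supplies each fibre with a canonical radial coordinate pointing to $L(\Sigma)$, varying continuously in $s$, so a local trivialization of $\partial W_0$ cones off uniquely to a trivialization of $W_0$. The implication $(iii) \Rightarrow (iv)$ is a direct application of Ehresmann's theorem for compact manifolds with boundary: $W_t$ is a smooth manifold with boundary $\partial W_t$ by Corollary \ref{cor_1}; the projection $p: W_t \to L(\Sigma)$ is a proper submersion on the interior by Lemma \ref{lemma_Wt}; and $(iii)$ supplies the boundary submersion.

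The main step is the transport $(ii) \Leftrightarrow (iii)$ across the critical level $t = 0$. Consider the combined map $(p, f): \partial W \cap f^{-1}(\B_\eta) \to L(\Sigma) \times \B_\eta$. Outside $L(\Sigma) \times \{0\}$, Corollary \ref{cor_1} together with a uniform Milnor sphere transversality for each $f_s$ shows this map is a submersion, hence a fibre bundle there by Ehresmann. The Thom $a_f$-property then provides the inclusion of limiting tangent planes of $L_t$ (as $t \to 0$) into tangent planes of the strata of $L_0$, which forces transversality with the sphere fibres $\partial \B_\delta(s)$ to be inherited across $t = 0$; from this one builds a stratified Thom--Mather controlled vector field on $\partial W \cap f^{-1}(\B_\eta)$ that covers the projection to $L(\Sigma)$ and is tangent to every $\partial W_t$, and integrates it to produce the required trivializations. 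The main obstacle is exactly this step: turning the mere inclusion of limiting tangents given by the Thom $a_f$-property into continuous trivializations across $t = 0$ requires the full machinery of controlled vector fields over stratified spaces, and without the $a_f$ hypothesis the argument genuinely breaks down.
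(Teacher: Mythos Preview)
Your outline matches the paper's on three of the four links: the restrictions $(i)\Rightarrow(ii)$ and $(iv)\Rightarrow(iii)$, the cone argument for $(ii)\Rightarrow(i)$, and Ehresmann via Lemma~\ref{lemma_Wt} for $(iii)\Rightarrow(iv)$ are exactly what the paper does.

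The divergence is at $(ii)\Leftrightarrow(iii)$, which you call ``the main step'' and for which you invoke the combined map $(p,f)$ together with Thom--Mather controlled vector fields to cross $t=0$. The paper does not regard this as the main step at all: it dispatches it in one line by citing Lemma~\ref{lemma_vz}. That lemma already proves, by an ordinary Ehresmann argument on the manifold-with-boundary $M=\BS_\e\cap f^{-1}(\B_\eta)\setminus\mathring W$, that $f_{|_M}$ is a locally trivial fibration over the \emph{entire} disc $\B_\eta$, including $t=0$; the Thom $a_f$-property entered only earlier, to supply the transversality of Corollary~\ref{cor_1}. So the passage $\partial W_0\leftrightarrow\partial W_t$ comes from an unstratified Ehresmann flow on $\partial M$, with no need for stratified controlled tubes. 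Your route would work, but it overshoots: the crossing of the critical level has already been absorbed into Lemma~\ref{lemma_vz}.

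One smaller remark on $(ii)\Rightarrow(i)$: the ``canonical radial coordinate'' of the disk bundle $W$ is not the right cone parameter, since the radial segments of $\B_\delta(s)$ need not lie in $f_s^{-1}(0)$. The paper instead uses the intrinsic conical structure of the analytic set $f_s^{-1}(0)$ at its isolated singular point $s$; the actual work is showing that a single Milnor radius $\theta\le\delta$ serves simultaneously for every $s\in L(\Sigma)$, which is argued by contradiction from the fibre-bundle hypothesis on $\partial W_0$. Your sketch has the right shape but the wrong coning mechanism.
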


\begin{proof} 
The implications $(i) \Rightarrow (ii)$ and $(iv) \Rightarrow (iii)$ are immediate, and  the equivalence $(ii) \Leftrightarrow (iii)$  follows from Lemma \ref{lemma_vz}. Let us prove now that $(ii) \Rightarrow (i)$. First we show that $f_s^{-1}(0)$ is homeomorphic to $f_{s'}^{-1}(0)$ for all $s,s' \in L(\Sigma)$. Fix $s \in L(\Sigma)$ and let the ball $\B_\theta(s) \subset \B_\delta(s)$ centered at $s$ be a Milnor ball for $f_s$.

We claim that there exists a neighborhood $V_s$ of $s$ in $L(\Sigma)$ such that for any $s' \in V_s$, one has that $\B_\theta(s') \subset\B_\delta(s')$ is a Milnor ball for $f_{s'}$. To prove this claim, suppose this is not true, {\it i.e.},  there exist $s'$ as close to $s$ as one wishes, and $\theta'$ with $0< \theta' \leq \theta$ such that $f_{s'}^{-1}(0)$ intersects $\BS_{\theta'}$ non transversally. Set $\delta = \theta'$. Since $s'$ is close to $s$ and since $f$ is continuous, if $f_s^{-1}(0)$ intersects (transversally) $\BS_{\theta'}$ in $q$ connected components, then $f_{s'}^{-1}(0)$ intersects $\BS_{\theta'}$ transversally in at least $q$ connected components, and non transversally in at least one. Then $\partial W_0$ is not a fibre bundle over $L(\Sigma)$, which a contradiction (see figure \ref{fig3}).

\begin{figure}[!h] 
\centering 
\includegraphics[scale=0.5]{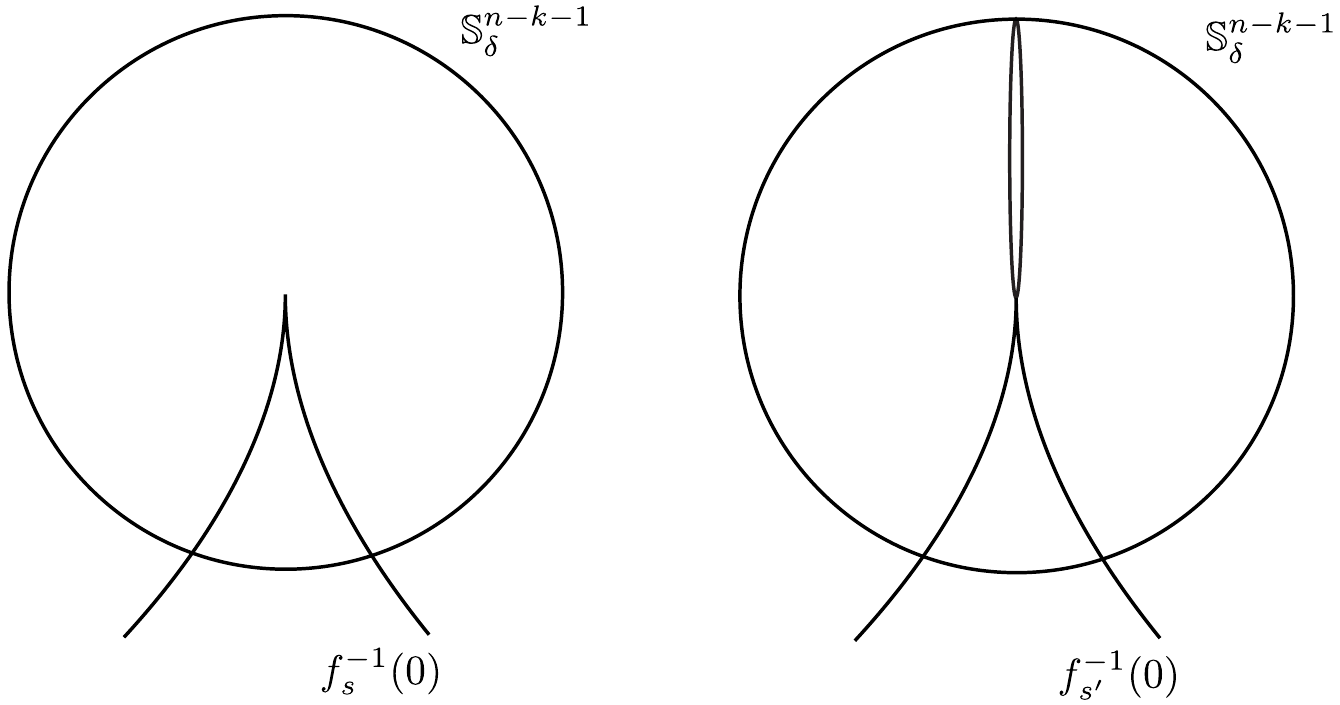}
\caption{}
\label{fig3}
\end{figure}

Then we have that $f_s^{-1}(0) \cap \B_\theta \cong Cone(f_s^{-1}(0) \cap \BS_\theta)$ and $f_{s'}^{-1}(0) \cap \B_\theta \cong Cone(f_{s'}^{-1}(0) \cap \BS_\theta)$, for any $s' \in V_s$, and since we are supposing $\partial W_0$ is a fibre bundle, we have that $f_s^{-1}(0) \cap \BS_\theta \cong  f_{s'}^{-1}(0) \cap \BS_\theta$. Setting $\delta = \theta$, it follows that $f_s^{-1}(0) \cong  f_{s'}^{-1}(0)$, for any $s' \in V_s$. Since $L(\Sigma)$ is compact, we can choose $\delta >0$ sufficiently small so that $W_0$ is a fibre bundle.
\\
Now we just have to show the locally triviality of $p_|$. Given $s \in L(\Sigma)$, let $V_s$ be a neighborhood of $s$ in $L(\Sigma)$ such that $p$ is trivial in $V_s$, that is, there exists a homeomorphism $\Psi = (\Psi_1, \Psi_2):  p^{-1}(V_s) \to V_s \times \B_\delta(s)$. We have to show that $p_|^{-1}(V_s)$ is homeomorphic to $V_s \times p_|^{-1}(s)$, and then its enough to show that $p^{-1}(V_s) \cap L_0 \cong V_s \times \big(\B_\delta(s) \cap L_0 \big)$. This homeomorphism is given by the map $\varphi: p^{-1}(V_s) \cap L_0 \to V_s \times \big( \B_\delta(s) \cap L_0 \big)$ given by $\varphi(w):= (\Psi_1(w), h_{\Psi_1(w)}^{-1}(w))$, with $\varphi^{-1}(r_1,r_2):= h_{r_1}(r_2)$. Thus we get     that 
$(ii) \Rightarrow (i)$.

The proof of Proposition \ref {equiv} will be complete if we prove $(iii) \Rightarrow (iv)$; let us do so.
 By lemma \ref{lemma_Wt} we know that $\mathring{W_t}$ intersects $\B_\delta(s)$ transversally, for any $s \in L(\Sigma)$. Since $p_|: \partial W_t \to L(\Sigma)$ is the projection of a fibre bundle, it follows that it is a surjective submersion. Therefore $\partial W_t$ intersects $\B_\delta(s)$ transversally, for any $s \in L(\Sigma)$. Then it follows from the Ehresmann's fibration lemma \index{Ehresmann's fibration lemma} that $p_|: W_t \to L(\Sigma)$ is the projection of a fibre bundle.
\end{proof}

\begin{prop}
\label{theo_BWS}
The following conditions are equivalent:
\begin{itemize}
\item[$(i)$] $W_0$ is a fibre bundle over $L(\Sigma)$ with projection $p_0: W_0 \to L(\Sigma)$ and fibre $f_s^{-1}(0)$;
\item[$(ii)$] $L(\Sigma)$ is a stratum of a stratification of $W_0$ induced by a Whitney stratification of $f$;
\item[$(iii)$] Either $\Sigma$ or $\Sigma \backslash \{0\}$ is a stratum of a Whitney stratification of $f$;
\end{itemize}
\end{prop}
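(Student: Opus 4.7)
My plan is to close the loop $(iii) \Rightarrow (ii) \Rightarrow (i) \Rightarrow (iii)$, treating the first two implications as essentially standard consequences of Thom-Mather theory and leaving the real work for the last one.

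For $(iii) \Rightarrow (ii)$ I would fix a Whitney stratification $\mathcal{S}$ of $f$ in which $\Sigma$ (or $\Sigma \setminus \{0\}$) is a single stratum. Because Whitney stratifications are stable under transverse intersection, and because for $\e$ sufficiently small the sphere $\BS_\e$ is transverse to every stratum of $\mathcal{S}$ (the usual curve-selection/conic-structure argument), the trace of $\mathcal{S}$ on $L_0$ is a Whitney stratification with $L(\Sigma) = \Sigma \cap \BS_\e$ as one of its strata. Restricting further to $W_0 \subset L_0$ yields $(ii)$. For $(ii) \Rightarrow (i)$ I would apply the Thom-Mather first isotopy theorem to the Whitney stratified space $W_0$ along the stratum $L(\Sigma)$. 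Since $W$ is already a disk bundle over $L(\Sigma)$ by Lemma~\ref{vz}, and the disks $\B_\delta(s)$ can be used as the fibres of a control system compatible with the stratification, Thom-Mather produces a trivialization of a neighborhood of $L(\Sigma)$ in $W_0$ whose fibre at $s$ is $W_0 \cap \B_\delta(s) = f_s^{-1}(0)$, giving $(i)$.

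The substantive implication is $(i) \Rightarrow (iii)$. Start with any Whitney stratification $\mathcal{T}$ of $f$; since $\Sigma$ is the singular locus of $V$ it is automatically a union of strata of $\mathcal{T}$, and near $0$ it decomposes into $\{0\}$ (if $0$ is an isolated singularity of $\Sigma$) together with strata contained in $\Sigma \setminus \{0\}$. The fibre bundle structure of $(i)$ forces the local topological type of the pair $(V, \Sigma)$ to be constant along $L(\Sigma)$, and by conicity the same holds along $\Sigma \setminus \{0\}$. In particular no stratum of $\mathcal{T}$ strictly contained in $\Sigma \setminus \{0\}$ can have dimension less than $r$, for otherwise the corresponding germ of $V$ would differ topologically from nearby germs along $\Sigma$, contradicting the triviality produced by $(i)$. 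Hence every stratum of $\mathcal{T}$ meeting $\Sigma \setminus \{0\}$ is $r$-dimensional and, being locally closed and smooth of the same dimension as the smooth manifold $\Sigma \setminus \{0\}$, they can be amalgamated into the single stratum $\Sigma \setminus \{0\}$. It remains to verify the Whitney $(b)$-condition of this enlarged stratum against the regular part $V \setminus \Sigma$; this follows from the local product structure along $\Sigma \setminus \{0\}$ provided by $(i)$, pulled back from $L(\Sigma)$ through the conic structure of $V$.

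The main obstacle is precisely this last verification: passing from the topological fibre-bundle information in $(i)$ to the differential-geometric Whitney conditions required in $(iii)$. One should be careful that Thom's $a_f$-property, which is part of the standing hypothesis on $f$, provides exactly the control needed to lift the bundle trivialization on $W_0$ to a stratified trivialization of $V$ near $\Sigma \setminus \{0\}$, so that the Whitney pair $(\Sigma \setminus \{0\}, V \setminus \Sigma)$ is realized. If this approach turns out to be delicate, an alternative is to invoke an equisingularity theorem along the smooth locus of $\Sigma$ to construct the Whitney stratification directly from the product structure on $W_0$.
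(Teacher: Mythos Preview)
Your treatment of $(iii) \Rightarrow (ii)$ and $(ii) \Rightarrow (i)$ is in line with the paper: the first is the transverse slice of a Whitney stratification by $\BS_\e$, the second is Thom--Mather local triviality along the stratum $L(\Sigma)$, using the disks $\B_\delta(s)$ of the tube $W$ as normal slices.

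The divergence is in the hard direction. The paper proves $(i) \Rightarrow (ii)$ (not $(i) \Rightarrow (iii)$) by a \emph{direct} verification of Whitney's $(b)$-condition for the pair $\big(L(\Sigma),\, \mathrm{Reg}(W_0)\big)$. Given sequences $x_i \in \mathrm{Reg}(W_0)$ and $y_i \in L(\Sigma)$ both converging to $s \in L(\Sigma)$, with $T_{x_i}(\mathrm{Reg}\,W_0) \to T$ and $\overline{x_i y_i} \to \lambda$, the bundle structure of $(i)$ is used to split tangent spaces as $T_{x_i}(\mathrm{Reg}\,W_0) \cong T_{x_i}\big(f_{s_i}^{-1}(0)\big) \times T_{s_i}L(\Sigma)$ (where $s_i = p(x_i)$) and to decompose the secant $\overrightarrow{x_iy_i} = \overrightarrow{x_is_i} + \overrightarrow{s_iy_i}$ into a fibre part and a base part. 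This reduces $\lambda \subset T$ to the corresponding statement inside the single fibre $f_s^{-1}(0)$, i.e.\ to Whitney $(b)$-regularity of the pair $\big(\{s\},\, \mathrm{Reg}(f_s^{-1}(0))\big)$, which holds automatically because $f_s^{-1}(0)$ has an isolated singularity at $s$.

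Your route via amalgamation of strata in $\Sigma \setminus \{0\}$ eventually runs into exactly this verification, and you correctly flag it as the obstacle; the gap is that you do not carry it out. Saying that Whitney $(b)$ ``follows from the local product structure provided by $(i)$'' is not sufficient as stated, since $(i)$ is a priori only a \emph{topological} bundle, while $(b)$ concerns limits of tangent planes and secant lines. The paper's fibrewise reduction is precisely the mechanism that converts the bundle hypothesis into the required $(b)$-condition; I would replace your amalgamation paragraph by that argument. The equisingularity alternative you mention would be a substantially heavier tool and would in any case require the same fibrewise input.
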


\begin{proof}
It is easy to see that $(ii)$ and $(iii)$ are equivalent. Let us  prove
$(i) \Rightarrow (ii)$.  Suppose that $f_s^{-1}(0) \cong f_{s'}^{-1}(0)$ for any $s,s' \in L(\Sigma)$. 
Given a point $p \in W_0$, let $\vec p$ denote the vector in $\bR^m$ defined by $p$. For any $p_1, p_2 \in W_0$, set $\overrightarrow{p_1p_2} = \vec p_1 + \vec p_2$ and denote $\overline{p_1p_2}$ the line in $\bR^m$ determined by $\overrightarrow{p_1p_2}$.
Let $(x_i)$ be a sequence of points in $Reg(W_0)$ and $(y_i)$ a sequence of points in $L(\Sigma)$ such that:
$$\displaystyle \lim_{i \to \infty} x_i = s \in L(\Sigma) ,\ \ \displaystyle \lim_{i \to \infty} y_i = s ,\ \ \displaystyle \lim_{i \to \infty} T_{x_i}(Reg W_0) = T \ \ \text{and} \ \displaystyle \lim_{i \to \infty} \overline{x_i y_i} = \lambda.$$
We have to show that $T \supset \lambda$.
Setting $s_i := p(x_i)$, we claim that:
 $$\displaystyle \lim_{i \to \infty} \overline{x_i y_i} \subseteq \displaystyle \lim_{i \to \infty} T_{x_i}(Reg W_0) \,,$$
 if and only if $\displaystyle \lim_{i \to \infty} \overline{x_i s_i} \subseteq \displaystyle \lim_{i \to \infty} T_{x_i}(f_{s_i}^{-1}(0))$. 
In fact, for any $s \in L(\Sigma)$, let $V_s$ be a neighborhood of $s$ in $L(\Sigma)$ such that $p_|^{-1}(V_s) \cong f_s^{-1}(0) \times V_s$. Then 
$$T_{x_i}(Reg W_0) = T_{x_i}(f_{s_i}^{-1}(0)) \times T_{s_i} (V_{s_i}) \,,$$
and therefore 
$$\displaystyle \lim_{i \to \infty} T_{x_i}(Reg W_0) = \displaystyle \lim_{i \to \infty} T_{x_i}(f_{s_i}^{-1}(0)) \times \displaystyle \lim_{i \to \infty} T_{s_i} (V_{s_i})\,.$$
Note that $\overrightarrow{x_i y_i} = \overrightarrow{x_i s_i} + \overrightarrow{s_i y_i}$ and therefore 
$$\displaystyle \lim_{i \to \infty} \overrightarrow{x_i y_i} = \displaystyle \lim_{i \to \infty} \overrightarrow{x_i s_i} + \displaystyle \lim_{i \to \infty} \overrightarrow{s_i y_i}.$$
Then 
$$\displaystyle \lim_{i \to \infty} \overline{x_i y_i} \subset \displaystyle \lim_{i \to \infty} T_{x_i}(Reg W_0)$$
if and only if 
$$\displaystyle \lim_{i \to \infty} \overrightarrow{x_i s_i} + \displaystyle \lim_{i \to \infty} \overrightarrow{s_i y_i} \subset \displaystyle \lim_{i \to \infty} T_{x_i}(f_{s_i}^{-1}(0)) \times \displaystyle \lim_{i \to \infty} T_{s_i} (V_{s_i}),$$
which happens if and only if 
$$\displaystyle \lim_{i \to \infty} \overline{x_i s_i} \subset \displaystyle \lim_{i \to \infty} T_{x_i}(f_{s_i}^{-1}(0)).$$

Now consider $h_i: \B_\delta(s_i) \to \B_\delta(s)$ the homeomorphism such that $h_i(f_{s_i}^{-1}(0)) = f_s^{-1}(0)$. We claim that:
$$\displaystyle \lim_{i \to \infty} \overline{x_i s_i} = \displaystyle \lim_{i \to \infty} \overline{(h_i(x_i))s}\,.$$
For this, set $R := \displaystyle \lim_{i \to \infty} \overline{x_i s_i}$. Then for each $p \in R$, there exists a sequence of points $(p_i)$ in $\bR^m$ such that $p_i \in  \overline{x_i s_i} $ and $p_i \to p$. 
Define $\tilde{p_i} := pr(h_i(p_i))$, where $pr$ is the orthogonal projection of $\B_\delta(s)$ into $\overline{(h_i(x_i))s}$. 
Since $p_i \to p$, it happens that for any arbitrarily small ball $\B(p)$ in $\bR^m$ centered at $p$ there exists a some $p_i$ in $\B(p)$, which implies that $h_i(p_i) \in \B(p) \cap \B_\delta(s)$, and then $\tilde{p_i} \in \B(p) \cap \B_\delta(s)$. Therefore $\tilde{p_i} \to p$.
Thus for all $p \in R$ there exists a sequence of points $(\tilde{p_i})$ in $\bR^m$ such that $\tilde{p_i} \in  \overline{(h_i(x_i))s}$ and $\tilde{p_i} \to p$, and hence $\displaystyle \lim_{i \to \infty} \overline{(h_i(x_i))s} = R$.

Now we claim $$\displaystyle \lim_{i \to \infty} T_{x_i}(f_{s_i}^{-1}(0)) = \displaystyle \lim_{i \to \infty} T_{h_i(x_i)}(f_s^{-1}(0))\,.$$ For this,    
set $S := \displaystyle \lim_{i \to \infty} T_{x_i}(f_{s_i}^{-1}(0))$. Then for each $p \in S$, there exists a sequence of points $(p_i)$ in $\bR^m$ such that $p_i \in  T_{x_i}(f_{s_i}^{-1}(0)) $ and $p_i \to p$. 
Consider $H_i: T_{x_i}(f_{s_i}^{-1}(0)) \to T_{h_i(x_i)}(f_s^{-1}(0))$ a homeomorphism between these two hyperplanes and define $\tilde{p_i} := H_i(p_i)$. 
Since $p_i \to p$, it happens that for any arbitrarily small ball $\B(p)$ in $\bR^m$ centered at $p$ there exists some $p_i$ in $\B(p)$, which implies that $H_i(p_i) \in \B(p) \cap \B_\delta(s)$, and therefore $\tilde{p_i} \to p$.  
Then $\forall p \in S$ there exists a sequence of points $(\tilde{p_i})$ in $\bR^m$ such that $\tilde{p_i} \in T_{h_i(x_i)}(f_s^{-1}(0))$ and $\tilde{p_i} \to p$, and hence $\displaystyle \lim_{i \to \infty}T_{h_i(x_i)}(f_s^{-1}(0)) = S$.               

So we have proved that 
$$\lambda \subseteq T \Leftrightarrow \displaystyle \lim_{i \to \infty} \overline{(h_i(x_i))s} \subseteq \displaystyle \lim_{i \to \infty} T_{h_i(x_i)}(f_s^{-1}(0)).$$
But since $f_s^{-1}(0)$ has an isolated singularity, it follows that $Reg(f_s^{-1}(0))$ and $ \{s\}$ are the two strata of a Whitney stratification of $f_s^{-1}(0)$. Hence $\displaystyle \lim_{i \to \infty} \overline{(h_i(x_i))s} \subseteq \displaystyle \lim_{i \to \infty} T_{h_i(x_i)}(f_s^{-1}(0))$, by the $(b)$-Whitney condition, and then we are done.

Finally, let us prove $(ii) \Rightarrow (i)$.  By the properties of Whitney stratifications, we know that for each $s \in L(\Sigma)$ there exists a neighborhood $U_s$ of $s$ in $W_0$ such that $U_s \cong V_s \times [f_s^{-1}(0) \cap U_s]$, where $V_s := U_s \cap L(\Sigma)$. Since $L(\Sigma)$ is compact, we can choose $\delta > 0$ sufficiently small such that $f_s^{-1}(0) \cap U_s = f_s^{-1}(0)$, for any $s \in L(\Sigma)$. Then we have that $p_|^{-1}(V_s) \cong V_s \times f_s^{-1}(0)$. In particular, $f_s^{-1}(0) \cong f_{s'}^{-1}(0)$, for any $s' \in V_s$.
\end{proof}

Combining  the results of this chapter we get:

\begin{theo} \label{theo_c4}
Let $f: (\bR^m,0) \to (\bR^k,0)$, with $m\geq k$, be a real analytic map-germ such that $0 \in \bR^k$ is an isolated critical value and $f$ has the Thom $a_f$-property. Then:

\begin{itemize}
\item[$(a)$] There exists a compact  regular neighbourhood $W$ of $L(\Sigma)$ in $\BS_\e$ with smooth boundary, such that for all $t$ with $\Vert t \Vert$ sufficiently small, one has that $L_t \backslash \mathring{W}$ is homeomorphic to $L_0 \backslash \mathring{W}$, where $ \mathring{W}$ is the interior of $W$. 

\item[$(b)$] If we suppose that $\Sigma$ has at most an isolated singularity,  then $W$  can be chosen to be a fibre bundle over $L(\Sigma)$ with fibre a disk.  

\item[$(c)$] In this latter case one has that the intersection $W_t:= L_t \cap W_t$ is a fibre bundle over $L(\Sigma)$ if and only if the intersection $W_0:= L_0 \cap W_t$ is a fibre bundle over $L(\Sigma)$, and this happens if and only if  $\Sigma \backslash \{0\}$ is a stratum of a Whitney stratification of $f$.
\end{itemize}
\end{theo}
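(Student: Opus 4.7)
The plan is to assemble this theorem directly from the lemmas and propositions already established in Section \ref{section_BMF}, since Theorem \ref{theo_c4} is essentially a consolidation of the preceding material. I will treat parts (a), (b), (c) separately, in that order.

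For part (a), I would first invoke Lemma \ref{vz} to produce a compact regular neighbourhood $W$ of $L(\Sigma)$ in $\BS_\e$ whose boundary is smooth and transverse to $L_0$. Corollary \ref{cor_1} then promotes this transversality to $L_t$ for every sufficiently small $t$, so $L_t\setminus \mathring W$ is a smooth compact manifold with boundary. The diffeomorphism $L_t\setminus \mathring W \cong L_0\setminus \mathring W$ is precisely the content of Lemma \ref{lemma_vz}, proved there via Ehresmann's fibration lemma applied to the restriction of $f$ to $\BS_\e \cap f^{-1}(\BD_\eta)\setminus \mathring W$; the two transversality hypotheses needed for Ehresmann come from Milnor's theorem (fibres transverse to $\BS_\e$) and Corollary \ref{cor_1} (fibres transverse to $\partial W$).

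For part (b), I would simply quote the second half of Lemma \ref{vz}: under the hypothesis that $\Sigma$ is smooth or has an isolated singularity, the link $L(\Sigma)$ is a smooth closed submanifold of $\BS_\e$, so the cellular tube produced by the Brasselet construction retracts to $L(\Sigma)$ as a disk bundle of rank $m-r$; Hirsch's smoothing theorem is applied to arrange that $\partial W$ is smooth, and the bundle structure is preserved under this smoothing since the disks are transverse to the strata they meet.

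For part (c), the content splits into two equivalences. The first, namely that $W_t$ is a fibre bundle over $L(\Sigma)$ if and only if $W_0$ is, follows by reading the implications $(i)\Leftrightarrow(iv)$ in Proposition \ref{equiv}; one obtains this by chaining $(i)\Rightarrow(ii)\Leftrightarrow(iii)\Rightarrow(iv)\Rightarrow(iii)\Rightarrow(ii)\Rightarrow(i)$, which is exactly the cycle proved there (using Lemma \ref{lemma_Wt} to guarantee transversality of $\mathring W_t$ with the disk fibres in the $(iii)\Rightarrow(iv)$ step). The second equivalence, that $W_0$ being a fibre bundle is equivalent to $\Sigma\setminus\{0\}$ being a stratum of a Whitney stratification of $f$, is the equivalence $(i)\Leftrightarrow(iii)$ in Proposition \ref{theo_BWS}. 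Combining these yields (c).

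There is no substantial new obstacle at the level of Theorem \ref{theo_c4}; the work has already been carried out. If I were reproving the theorem from scratch the hardest points would be those already confronted in the preceding results: constructing $W$ with the correct transversality and bundle structure in Lemma \ref{vz} (where one must combine Brasselet's cellular tube with Hirsch's smoothing while preserving the disk-bundle structure over $L(\Sigma)$), and the implication $(i)\Rightarrow(ii)$ of Proposition \ref{theo_BWS} where one reduces the Whitney $(b)$-condition along $L(\Sigma)$ to the $(b)$-condition of the isolated singularity $f_s^{-1}(0)$ at $s$ by transporting limits of tangent planes through the fibrewise homeomorphisms $h_i$.
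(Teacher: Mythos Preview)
Your proposal is correct and matches the paper's approach exactly: the paper itself offers no proof beyond the sentence ``Combining the results of this chapter we get,'' and you have spelled out precisely which earlier results (Lemma \ref{vz}, Corollary \ref{cor_1}, Lemma \ref{lemma_vz}, Lemma \ref{lemma_Wt}, Proposition \ref{equiv}, Proposition \ref{theo_BWS}) feed into parts (a), (b), (c). If anything, your write-up is more explicit than the paper's, since you also recall the roles of Ehresmann and the Thom $a_f$-property in the cited lemmas.
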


\begin{cor} \label{cor_Wt}
In particular $W_t$ is a fibre bundle over $L(\Sigma)$ for the following map-germs:

\begin{itemize}
\item[$(a)$] $f: (\bC^n,0) \to (\bC,0)$ holomorphic with critical locus $\Sigma$ a complex curve (which was proved already by Siersma in \cite{Si});
\item[$(b)$] $\fgbar: (\bC^3,0) \to (\bC,0)$ where  $f,g: (\bC^3,0) \to (\bC,0)$ are holomorphic functions with no common irreducible components and such that $\fgbar$ has an isolated critical value. 
\end{itemize}
\end{cor}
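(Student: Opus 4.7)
The plan is to apply Theorem \ref{theo_c4}(c) to each of the two map-germs. By that theorem, to conclude that $W_t$ is a fibre bundle over $L(\Sigma)$ it suffices to verify in each case that the map-germ has an isolated critical value, that it satisfies the Thom $a_f$-property, that the singular set $\Sigma$ of $V$ has at most an isolated singularity, and that $\Sigma \setminus \{0\}$ is a stratum of a Whitney stratification of the map-germ.

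For case (a), the Thom $a_f$-property for holomorphic map-germs is classical (Hironaka, L\^e), and $0$ is automatically the only critical value of a holomorphic function. Since $\Sigma$ is assumed to be a complex curve, its singular locus is a $0$-dimensional complex-analytic set, hence in a sufficiently small ball consists only of $\{0\}$. Finally, $\Sigma \setminus \{0\}$ is smooth of pure complex dimension $1$, and any complex-analytic Whitney stratification of $V := f^{-1}(0)$ subordinate to the flag $\bC^n \supset V \supset \Sigma \supset \{0\}$ exhibits $\Sigma \setminus \{0\}$ as a union of strata, which after refinement we may take to be a single stratum. This recovers Siersma's result.

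For case (b), the Thom $a_{f\bar g}$-property for $f\bar g$ when $f$ and $g$ share no irreducible component is the main theorem of Pichon--Seade \cite{PS2}, and the hypothesis gives the isolated critical value. The variety $V = f^{-1}(0) \cup g^{-1}(0)$ is the union of two complex hypersurfaces in $\bC^3$, so its singular locus $\Sigma$ is contained in the union of $\mathrm{Sing}(f^{-1}(0))$, $\mathrm{Sing}(g^{-1}(0))$ and $f^{-1}(0) \cap g^{-1}(0)$. The no-common-component hypothesis forces each of these pieces to have complex dimension at most $1$, so $\Sigma$ is a complex curve in $\bC^3$; in particular it has at most an isolated singularity at $0$, and $\Sigma \setminus \{0\}$ is a smooth $1$-dimensional complex manifold. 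A Whitney stratification of $\bC^3$ refining the flag $\bC^3 \supset V \supset \Sigma \supset \{0\}$ then exhibits $\Sigma \setminus \{0\}$ as one of its strata.

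The point requiring some care, common to both items, is that Theorem \ref{theo_c4}(c) asks for $\Sigma \setminus \{0\}$ to be a stratum of a Whitney stratification \emph{of the map-germ}, not merely of the special fibre $V$; this is what guarantees that the $a_f$-condition can be exploited stratum by stratum in the arguments of Section \ref{section_BMF}. For the holomorphic case (a) this is standard. For case (b) it follows by combining the existence of a complex-analytic Whitney stratification of $V$ with the Thom $a_{f\bar g}$-property of \cite{PS2}: together they ensure that the canonical stratification of $V$ extends to a Thom stratification of $f\bar g$ along which $\Sigma \setminus \{0\}$ remains a stratum. Applying Theorem \ref{theo_c4}(c) then yields the desired fibre bundle structure of $W_t$ over $L(\Sigma)$ in both (a) and (b).
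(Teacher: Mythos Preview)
Your proposal is correct and follows essentially the same route as the paper: in both cases one checks that $\Sigma$ is a complex curve, hence has at most an isolated singularity at $0$, so that $\Sigma\setminus\{0\}$ is a Whitney stratum and Theorem~\ref{theo_c4}(c) applies. The only minor difference is that for (b) the paper identifies $\Sigma$ directly with the critical locus of the holomorphic product $fg$ (an identification justified later in the paper), whereas you bound $\Sigma$ by the union $\mathrm{Sing}(f^{-1}(0))\cup\mathrm{Sing}(g^{-1}(0))\cup(f^{-1}(0)\cap g^{-1}(0))$ and observe each piece is at most a curve; both lead to the same conclusion. Your final paragraph on stratifications of the map-germ versus the special fibre is more cautious than the paper, which simply invokes the well-known fact that $\mathrm{Reg}(\Sigma)$ appears as a stratum of any Whitney stratification of $V$.
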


\begin{proof} The first statement follows immediately from Theorem \ref{theo_c4}, since a germ of complex curve has at most an isolated singularity and hence $\Sigma \backslash \{0\}$ is a stratum of a Whitney stratification of $f$ (in fact, it is well-known that the singular set of a complex variety $V$ is an union of strata of a Whitney stratification $(S_\a)_{\a \in \Lambda}$ of $V$ with ${\rm Reg}(\Sigma) = S_\a$, for some $\a \in \Lambda$). The second statement follows by the same reason, since in this case  $\Sigma$ is the critical locus of $fg$, which is a complex curve.
\end{proof}

\vspace{.5cm}
\section{L\^e's Polyhedron for  real singularities}
\label{section_LP-real}

Consider now  a real analytic map germ 
$$f: (\bR^m,0) \to (\bR^k,0)\;, \quad 2 \leq k<m\,,$$
such that $0 \in \bR^m$ is an isolated critical point of $f$. Then $f$ has a local Milnor-L\^e fibration at $0$, that is, there exist $0 < \eta \ll \e$ sufficiently small such that the restriction
$$f_|: f^{-1}(\BD_\eta^*) \cap \B_\e \to \BD_\eta^*$$
is a locally trivial fibration.

In the definition below, just as in \cite{Le1}, by a polyhedron we mean a triangulable topological space.

\begin{defi} \label{defi_LP}
Let $F_f := f^{-1}(t) \cap B_\e$ be the local Milnor fibre of $f$ at $0$ and set $V:=  f^{-1}(0) \cap B_\e$. A L\^e polyhedron for the germ of $f$ at $0$ is a polyhedron $P \subset F_f$ such that:
\begin{enumerate}
\item $P$ is a deformation retract of $F_f$;
\item There is a continuous  map  $\Psi: F_f \to V$ such that $P = \Psi^{-1}(0)$ and \\ $\Psi: F_f \setminus P \to V \setminus \{0\}$ is a homeomorphism. Such a map is called  {\it collapsing map} for the local Milnor fibration.
\end{enumerate}
\end{defi}

Notice that if $P$ is such a polyhedron, then the homotopy type of $F_f$ equals that of $P$ and therefore $F_f$ can not have homology above the dimension of $P$. In \cite{Le1} L\^e Dung Trang described the degeneration of the Milnor fibre of  complex isolated hypersurface singularities  in terms of a vanishing polyhedron. In this section we extend L\^e's construction of a vanishing polyhedron to  real analytic map-germs of the type $\fgbar: (\bC^2,0) \to (\bC,0)$ with an isolated critical point at $0 \in \bC^2$.

\vspace{0.5cm}
\subsection{First step: constructing the polyhedron}

\ \\

Define the map $\phi: (\bC^2,0) \to (\bC^2,0)$ by setting $\phi(x,y) := \big(y, \fgbar(x,y)\big)$. Considering complex coordinates $x,\bar{x},y,\bar{y}$, the Jacobian matrix of $\phi$ is:

$$
J(\phi) =
\begin{pmatrix}
0 & 0 & 1 & 1 \\
0 & 0 & 1 & -1 \\
\frac{\partial f}{\partial x} \bar{g} & f \overline{\frac{\partial g}{\partial x}} & \frac{\partial f}{\partial y} \bar{g} & f \overline{\frac{\partial g}{\partial y}}  \\
\bar{f} \frac{\partial g}{\partial x} & \overline{\frac{\partial f}{\partial x}} g &
\bar{f} \frac{\partial g}{\partial y} & \overline{\frac{\partial f}{\partial y}} g &\\
\end{pmatrix} \;, 
$$
so the critical locus of $\phi$ is:
$$C(\phi) := \left\{ \big| f \frac{\partial g}{\partial x} \big|^2 - \big| g \frac{\partial f}{\partial x} \big|^2 =0 \right\}.$$
Notice that $C(\phi)$ is the zero-set of the real analytic function-germ $h: (\bR^4,0) \to (\bR,0)$ given by 
\begin{equation}\label{def h}
  h = \big| f \frac{\partial g}{\partial x} \big|^2 - \big| g \frac{\partial f}{\partial x} \big|^2 . \end{equation}

Geometrically, the set $C(\phi)$ consists of the critical points of $\fgbar$ together with the points $(x,y)$  which are regular points of $\fgbar$ and satisfy that  the kernel of the projection $\ell(x,y)=y$ is tangent to the fiber $(\fgbar)^{-1}\big(\fgbar(x,y)\big)$.

When $f$ is a holomorphic function-germ, one has that $C(\phi)$ and its image by $\phi$ are real surfaces in $\bR^4$ (actually they are complex curves), and this property is essential in the construction of a L\^e polyhedron by the methods of \cite{Le1}. But in the case of a real analytic map-germ of the type $\fgbar$ this is not always true. In fact, we have the following:

\begin{remark} {\rm
Given $\phi$ as above, let $h$ be as in equation {\rm (\ref{def h})}, so one has  that $C(h) \subseteq h^{-1}(0)$. We claim  that if there is a point in $h^{-1}(0)$ which is not   a critical point of $h$, then $\dim_\bR C(\phi) =3$. To see this, notice that
if there exists $z \in h^{-1}(0)$ such that $z$ is a regular point of $h$, then  $h^{-1}(h(x))$ has real dimension $3$ and since $h^{-1}(0) \supset h^{-1}(h(x))$ it follows $\dim_\bR C(\phi) = \dim_\bR h^{-1}(0) =3$.}
\end{remark}

Hence, in general 
we need some extra hypothesis on $\fgbar$ to ensure that $\phi\big( C(\phi) \big)$ is a real surface. The hypothesis we are going to ask from now on is that the map $g$  depends only on the variable $y$, {\it i.e.}, $g$ is of the form:
$$g(x,y) = g'(y)\,.$$
Then $\frac{\partial g}{\partial x} =0$ and we have that: 
$$C(\phi) = \{ g=0 \} \cup \left\{ \frac{\partial f}{\partial x} =0 \right\},$$
which is a complex curve in $\bC^2$.
Since $\{ g=0 \} \subset (\fgbar)^{-1}(0)$, it follows that the union of the irreducible components of $C$ that are not contained in $(\fgbar)^{-1}(0)$ is the complex curve
$$\Gamma = \left\{ \frac{\partial f}{\partial x} =0 \right\}.$$

Clearly, the same happens if one has that $g$ depends only on the variable $x$ or if the map $f$ depends only on one variable ($x$ or $y$).

Now set $\Delta := \phi(\Gamma)$, the polar image of $\fgbar$. Since $\phi$ is not holomorphic, $\Delta$ in general is not a complex curve in $\bC^2$, but it can be given a parametrization, as we show bellow.

Since $f$ and $g$ are holomorphic, we can write them as convergent power series 
$$f = \sum_{a,b=0}^\infty \kappa_{a,b} x^a y^b$$
and
$$g = \sum_{c=0}^\infty \kappa_c y^c,$$
with $\kappa_{a,b}, \kappa_c \in \bC$. Therefore the real analytic germ $\fgbar$ can be written as the convergent power series
$$\fgbar = \sum_{a,b,c=0}^\infty \kappa_{a,b,c} x^a y^b \bar{y}^c \,.$$
Now consider the decomposition of the complex curve $\Gamma$ into irreducible components:
$$\Gamma = \gamma_1 \cup \dots \cup \gamma_q.$$
Then for each $i = 1, \dots, q$ we can give $\gamma_i$ a Puiseux parametrization:

$$x = \sum_{j \geq 0} \a_{i,j} y^{m_{i,j}/n_i},$$
where $\a_{i,j} \in \bC$, with $\a_{i,0} \neq 0$, and $n_i,m_{i,j} \in \bN^*$. For each $i = 1, \dots, q$, set
$$\tau_i := \phi(\gamma_i) \subset \bC^2.$$
Then each $\tau_i$ admits a parametrization in the complex coordinates $(u,v)$ of $\bC^2$:

$$v(u) = \sum_{a,b,c=0}^\infty [\kappa_{a,b,c} u^b \bar{u}^c (\sum_{j \geq 0} \a_{i,j} u^{m_{i,j}/n_i})^a] .$$
In particular, if we consider the real analytic function
$$ 
\begin{array}{cccc}
\vartheta \ : & \! \bR^2 & \! \longrightarrow & \! \bR^2 \\
& \! u & \! \longmapsto & \! v(u)
\end{array} 
,$$
we get that $\tau_i \cap \{ v=t \} = \vartheta^{-1}(t)$, for any complex number $t$, which gives a finite union of points. Thus we arrive to the following lemma:

\begin{lemma}\label{polar curve}
Let $f, g$ be as above, with $g$ depending only on the variable $y$ and set $\phi = (y, \fgbar)$. Then:
\begin{enumerate}
\item The corresponding polar curve  is $C = \{ g=0 \} \cup \left\{ \frac{\partial f}{\partial x} =0 \right\},$ and therefore the union of the irreducible components of $C$ that are not contained in $(\fgbar)^{-1}(0)$ is the complex curve
$\Gamma = \left\{ \frac{\partial f}{\partial x} =0 \right\}.$
\item Each branch of the  real analytic variety $\Delta := \phi(\Gamma)$ has a Puisseux parametrization.
\item For each $t \in \bC$ with $|t| >0$ sufficiently small one has that each branch of $\Delta$ meets the plane $\bC \times \{\eta\} \subset \bC \times \bC$ in finitely many points.
\end{enumerate}
\end{lemma}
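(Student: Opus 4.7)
The plan is to dispatch the three items in order, reducing each to standard ingredients: part (1) to a direct reading of the Jacobian computed above, part (2) to classical Puiseux theory for complex curves combined with a substitution, and part (3) to a real-analytic finiteness statement for the parametrization produced in part (2). The last point is where the main subtlety lies, since $\Delta$ is not a complex curve and the usual holomorphic Weierstrass preparation is unavailable.

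For part (1) I would simply substitute $\partial g/\partial x \equiv 0$, which holds because $g = g(y)$, into the formula $C(\phi) = \{|f\,\partial g/\partial x|^2 - |g\,\partial f/\partial x|^2 = 0\}$ already recorded above. This collapses $C(\phi)$ to $\{g = 0\} \cup \{\partial f/\partial x = 0\}$. The inclusion $\{g=0\} \subset (\fgbar)^{-1}(0)$ is immediate; under the tacit assumption that $f$ genuinely depends on $x$ (otherwise one is in the alternative sub-case of the paper) the complex curve $\{\partial f/\partial x = 0\}$ then supplies the $\Gamma$ of the statement, since none of its branches is generically contained in $\{f=0\}$.

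For part (2) I would take the standard decomposition $\Gamma = \gamma_1 \cup \cdots \cup \gamma_q$ into irreducible branches together with the Puiseux parametrization $x = \sum_{j \ge 0} \alpha_{i,j}\,y^{m_{i,j}/n_i}$ of each $\gamma_i$. Then $\tau_i = \phi(\gamma_i)$ is parametrized by $y \mapsto (y,\; f(\gamma_i(y))\,\overline{g(y)})$; plugging the given convergent power-series expansions of $f$ and $g$ into this formula and rearranging produces exactly the real-analytic series $v(u)$ written in the excerpt, with fractional exponents $m_{i,j}/n_i$. This is the claimed Puiseux-type parametrization of $\tau_i$ in the real-analytic category.

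The heart of the matter is part (3). Via the parametrization of part (2), a point of $\tau_i \cap (\bC \times \{t\})$ corresponds to a solution $u \in \bC$ of $\vartheta(u) = t$, where $\vartheta \colon (\bC,0) \to (\bC,0)$ is the real-analytic germ $u \mapsto v(u)$. I would prove finiteness by showing that $\vartheta$ is a finite real-analytic map-germ at $0$: because $\gamma_i \not\subset (\fgbar)^{-1}(0)$ by part (1), the function $v$ is not identically zero, so the associated real-analytic map-germ $\vartheta \colon (\bR^2,0) \to (\bR^2,0)$ has isolated zero at the origin. A standard argument in real-analytic geometry, such as applying the real Weierstrass preparation theorem to the lowest-order form of $v(u,\bar u)$, or complexifying $u$ and $\bar u$ as independent variables and appealing to holomorphic finiteness, then yields a neighbourhood of $0$ on which $\vartheta$ is proper with $0$-dimensional, hence finite, fibres. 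Making this real-analytic finiteness step precise is the part I expect to require the most care.
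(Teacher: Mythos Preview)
Your proposal is correct and follows essentially the same route as the paper: the paper proves the lemma in the discussion immediately preceding its statement, computing the Jacobian to get part (1), substituting Puiseux parametrizations of the branches $\gamma_i$ into the power-series expansion of $f\bar g$ to get part (2), and reducing part (3) to the finiteness of $\vartheta^{-1}(t)$ for the real-analytic map $\vartheta(u)=v(u)$. In fact you supply more than the paper does for part (3): the paper simply asserts that $\vartheta^{-1}(t)$ is a finite set of points, whereas you correctly isolate the needed ingredient---that $v\not\equiv 0$ because $\gamma_i\not\subset (f\bar g)^{-1}(0)$, so $\vartheta$ has an isolated zero and is therefore a finite real-analytic map-germ.
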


Clearly, we have an analogous lemma if $g$ depends only on the variable $x$ or if $f$ depends only on one variable ($x$ or $y$).

Now we can follow the same arguments of \cite{Le1} to construct a L\^e polyhedron for $f$ as follows. Consider small enough positive real numbers $\e, \eta_1, \eta_2$ with $0 < \eta_2 \ll \eta_1  \ll  \e  \ll 1$ such that $\phi$ induces a real analytic map
$$\phi_|: \B_\e \cap \phi^{-1}(\BD_{\eta_1} \times \BD_{\eta_2}) \to \BD_{\eta_1} \times \BD_{\eta_2}.$$
It restricts to a fibre bundle over $(\BD_{\eta_1} \times \BD_{\eta_2}) \backslash \Delta$, where $\Delta:= \phi_|(C)$ is the discriminant of $\phi_|$ in $\BD_{\eta_1} \times \BD_{\eta_2}$.

Notice that for any  $t \in \BD_{\eta_2} \backslash \{0\}$ the Milnor fibre $\B_\e \cap f^{-1}(t)$ can be identified with the real surface $F_t := \B_\e \cap \phi^{-1}(\BD_{\eta_1} \times \{t\})$. Fix $t \in \BD_{\eta_2} \backslash \{0\}$ and set
$$D_t := \BD_{\eta_1} \times \{t\}.$$
Hence $\phi_|$ induces a projection
$$\varphi_t: F_t \to D_t,$$
which is a finite covering over $D_t \backslash (\Delta \cap D_t)$. 

By the previous Lemma we know that the intersection $\Delta \cap D_t$ is a finite set of points, for any $t \in \BD_{\eta_2}$. Set:
$$\Delta \cap D_t  = \{y_1(t), \dots, y_k(t) \}.$$
Let $\lambda_t$ be a point in $D_t \setminus \{y_1(t), \dots, y_k(t)\}$ and for each $j = 1, \dots, k$, let $\delta(y_j(t))$ be a simple path (differentiable and with no double points) starting at $\lambda_t$ and ending at $y_j(t)$, such that two of them intersect only at $\lambda_t$. 
Set
$$Q_t := \bigcup_{j=1}^k \delta(y_j(t))$$
and
$$P_t := \varphi_t^{-1}(Q_t),$$
which is clearly a one-dimensional polyhedron in $F_t$.

\s
\begin{figure}[!h] 
\centering 
\includegraphics[scale=0.8]{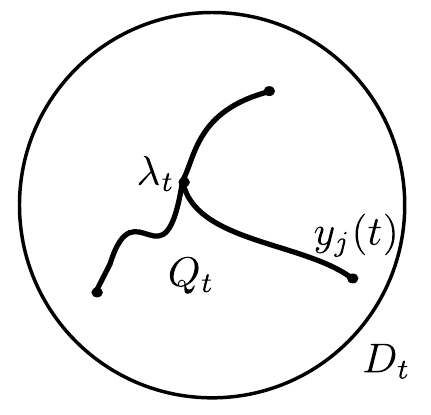}
\caption{}
\label{fig42_2}
\end{figure}
\s

\s
Now we prove that $P_t$ is a L\^e polyhedron for $f$ at $0$ as in Definition \ref{defi_LP}.

\begin{lemma} \label{lemma_v_t}
There exists a vector field $v_t$ in $D_t$ such that:  
\begin{enumerate}
\item It is differentiable (and hence integrable) over $D_t$;
\item It is null over $Q_t$;
\item It is  transversal to $\partial D_t$ and points inwards.
\item The associated flow $q_t: [0, \infty) \ \times \ (D_t \backslash Q_t) \to D_t$ defines a map
$$ 
\begin{array}{cccc}
\xi_t \ : & \! \partial D_t & \! \longrightarrow & \! Q_t \\
& \! u & \! \longmapsto & \! \displaystyle \lim_{\tau \to \infty} q_t(\tau,u)
\end{array} 
,$$
such that $\xi_t$ is continuous, surjective and differentiable.
\end{enumerate}
\end{lemma}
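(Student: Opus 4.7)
The plan is to realize $Q_t$ as the zero set of a smooth Lyapunov function $\rho : D_t \to [0,1]$ with $\rho^{-1}(0) = Q_t$ and $\rho = 1$ on $\partial D_t$, and to construct $v_t$ as an inward-pointing ``gradient-like'' field for $\rho$ whose local form near each stratum of $Q_t$ is a linear sink. Carrying out the construction this way simultaneously gives properties (1)--(3) and provides enough control on integral curves to produce the map $\xi_t$ in (4).

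First I would build $v_t$ in pieces dictated by the natural stratification of $Q_t$: the interiors of the arcs $\delta(y_j(t))$, the leaves $y_j(t)$, and the central vertex $\lambda_t$. Along the interior of each arc I pick a smooth tubular neighbourhood with coordinates $(s,n)$, where $s$ parametrizes the arc and $n$ is the signed normal coordinate, and set $v_t := -n\,\partial_n$. This vanishes exactly on the arc, and its flow $(s,n) \mapsto (s,\, n e^{-\tau})$ retracts the tube onto the arc fibrewise. Near each leaf $y_j(t)$ I choose a chart in which the arc arrives along a half-line and use a radial sink of the form $-(x-y_j(t))\partial_x - y\partial_y$ modified by a cutoff. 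Near $\lambda_t$ I subdivide a small disk into $k$ closed sectors by the outgoing arcs and define $v_t$ sectorially so that $\lambda_t$ is a sink and so that on the common boundary of two adjacent sectors the field agrees with the tube model of the shared arc. These local models are then patched by a partition of unity subordinate to the covering. Finally, on a collar of $\partial D_t$ I add a smooth inward radial field, so that the resulting $v_t$ is transversal to $\partial D_t$ and points inward, confirming (3); items (1) and (2) are built into the local models.

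For (4), note that along any integral curve the function $\rho$ is strictly decreasing (one checks this for each local model, then uses that the partition of unity only combines models with the same sign of $v_t(\rho)$), and is bounded below by $0$, so every trajectory accumulates on $Q_t$. In the tubular model on the interior of an arc the trajectory is literally $(s_0, n_0 e^{-\tau})$, hence converges to the single point $(s_0, 0)$; similarly the sink models at $y_j(t)$ and $\lambda_t$ give convergence to those vertices. Consequently $\xi_t(u) := \lim_{\tau\to\infty} q_t(\tau,u)$ is well defined on all of $\partial D_t$. Continuous dependence of the flow on initial conditions shows $\xi_t$ is continuous on the open subset of $\partial D_t$ whose trajectories remain in a single tube, and piecewise differentiable there; surjectivity is proved by running the flow backwards from each smooth point of an edge along its normal until it exits through $\partial D_t$, which is possible because $Q_t$ lies in the interior of $D_t$.

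The main obstacle is the matching of the local sectorial models at $\lambda_t$ and at the leaves $y_j(t)$ with the tube models on the adjacent arcs: it is at these singular points of the tree that the naive squared-distance function fails to be smooth, so one must check that the sector decomposition is compatible with the arc parametrizations and that the resulting $v_t$ remains smooth after being assembled by partitions of unity. Equivalently, one must verify that trajectories ending at a vertex do so tangent to a well-defined sector, so that $\xi_t$ is continuous at the finitely many points of $\partial D_t$ whose images are vertices of $Q_t$.
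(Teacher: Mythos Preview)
Your approach is essentially the paper's: on a tubular neighbourhood $N_t^j$ of each arc $\delta(y_j(t))$ the paper also takes the fibrewise (negative) gradient of the squared normal distance---exactly your $-n\,\partial_n$---and then patches the tube fields over $N_t = \bigcup_j N_t^j$ by a partition of unity. The only difference is how the field is extended off the tubes: the paper glues with the gradient of the squared distance to the central point $\lambda_t$ on $D_t \setminus N_t$ rather than adding an inward field on a boundary collar, and it does not introduce a Lyapunov function or single out the vertices, simply letting the partition of unity absorb the overlaps that you treat by explicit sectorial models.
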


\begin{proof}
For each $j=1, \dots, k$, let $N_t^j$ be a small tubular neighbourhood of $\delta(y_j(t))$ in $D_t$ given by the normal bundle. That is, $N_t^j = \delta(y_j(t)) \times L_j$, where $L_j$ is a small line segment and each fibre $L_j' \stackrel {\rm homeo.}{\simeq} L_j$ contains the intersection point $p' = L_j' \cap \delta(y_j(t))$. On each fibre $L_j'$ consider the gradient vector field of the square of the function given by distance to the point $p'$. This defines a vector field $r_t^j$ on $N_t^j$ which is null over $\delta(y_j(t))$ and transversal to $\partial N_t^j$, pointing inwards. Now set
$$N_t:= \bigcup_{j=1}^k N_t^j,$$
which is a neighbourhood of $Q_t$ in $D_t$, and let $r_t$ be the sum of the vector fields $r_t^j$ by a partition of unity. Then $r_t$ is a $C^\infty$ vector field on $N_t$ which is null over $Q_t$, non-null over $N_t \backslash Q_t$, transversal to the boundary $\partial N_t$ and pointing inwards. Moreover, the associated flow $s_t: [0, \infty) \ \times \ (N_t \backslash Q_t) \to N_t$ defines a continuous, surjective and differentiable map
$$ 
\begin{array}{cccc}
\xi_t \ : & \! \partial N_t & \! \longrightarrow & \! Q_t \\
& \! u & \! \longmapsto & \! \displaystyle \lim_{\tau \to \infty} s_t(\tau,u)
\end{array} 
.$$
Then the vector field $v_t$ is obtained by gluing $r_t$, by a partition of unity,  with the restriction  to $D_t \backslash N_t$ of the gradient vector field of the square of the function given by the distance to the point $\lambda_t$.

\end{proof}

One has:

\begin{prop} We can choose a lifting of  $v_t$ to a vector field $\tilde{v}_t$ in $F_t$ so that: 
\begin{enumerate}
\item It is continuous over $F_t$;
\item  It is differentiable (and hence integrable) over $F_t \backslash P_t$;
\item  It is null over $P_t$;
\item  It is transversal to $\partial F_t$ and points inwards.
\item The associated flow $\tilde{q}_t: [0, \infty) \ \times \ (F_t \backslash P_t) \to F_t$ defines a map
$$ 
\begin{array}{cccc}
\tilde{\xi}_t \ : & \! \partial F_t & \! \longrightarrow & \! P_t \\
& \! z & \! \longmapsto & \! \displaystyle \lim_{\tau \to \infty} \tilde{q}_t(\tau,z)
\end{array} 
,$$
such that $\tilde{\xi}_t$ is continuous, surjective and differentiable. 
\item The fiber $F_t$ is homeomorphic to the mapping cylinder of $\tilde{\xi}_t$. 
\end{enumerate}
\end{prop}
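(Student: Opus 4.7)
The plan is to lift $v_t$ through the branched covering $\varphi_t\colon F_t \to D_t$ via the inverse differential away from $P_t$, extend by zero on $P_t$, and verify the six properties in turn; the only analytically nontrivial point is continuity at ramification points of $\varphi_t$.

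First, recall that $P_t = \varphi_t^{-1}(Q_t)$ and that $\varphi_t$ is a finite covering over $D_t \setminus (\Delta \cap D_t)$, hence in particular a local diffeomorphism on $F_t \setminus P_t \to D_t \setminus Q_t$. I would therefore define $\tilde v_t$ on $F_t\setminus P_t$ as the unique local lift using the inverse of $d\varphi_t$, that is $\tilde v_t(z) := (d\varphi_t|_z)^{-1}(v_t(\varphi_t(z)))$. This is $C^\infty$ and projects to $v_t$, giving (2). I would then extend by zero on $P_t$, immediately obtaining (3).

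Second comes the delicate step, continuity at $P_t$, which is property (1). At a point of $P_t$ lying over the interior of a path $\delta(y_j(t))$, the map $\varphi_t$ is a local diffeomorphism, so $|\tilde v_t|$ is comparable to $|v_t\circ\varphi_t|$, and this tends to zero because $v_t$ vanishes on $Q_t$. The delicate case is at a branch point of $\varphi_t$, i.e.\ a point of $\Gamma \cap F_t$ lying over some $y_j(t)$. There, in suitable local coordinates, $\varphi_t$ looks like $z\mapsto z^d$, and $(d\varphi_t)^{-1}$ blows up like $|z|^{-(d-1)}$. The crucial point is that Lemma \ref{lemma_v_t} built $v_t$ locally as the gradient of the squared distance to the path, so $|v_t(\varphi_t(z))|\leq C|\varphi_t(z)-y_j(t)| = C|z|^d$, and the product with $|z|^{-(d-1)}$ is $O(|z|)$, which tends to zero. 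This gives continuity at $P_t$.

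Finally, properties (4)--(6) follow from the corresponding properties of $v_t$ by pushforward and lifting. Transversality to $\partial F_t$ and inward pointing (property (4)) follow from the analogous property of $v_t$ because $d\varphi_t$ is an isomorphism on $\partial F_t$ preserving outward normals. For the flow property (5), $\tilde v_t$ is $C^\infty$ on $F_t\setminus P_t$ and inward-pointing along $\partial F_t$, so the forward flow $\tilde q_t$ is globally defined; each integral curve of $\tilde v_t$ projects to an integral curve of $v_t$, which by Lemma \ref{lemma_v_t}(4) has a limit in $Q_t$, and the lifting property of the branched cover together with compactness forces the lift to have a unique limit in $\varphi_t^{-1}(Q_t)=P_t$. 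This defines $\tilde\xi_t$; continuity and surjectivity come from the commutative square $\varphi_t\circ\tilde\xi_t = \xi_t\circ\varphi_t|_{\partial F_t}$ combined with the analogous properties of $\xi_t$, and differentiability away from the ramification locus is automatic. Reparametrizing each flow line from $\partial F_t$ to $P_t$ by $[0,1]$ then yields the homeomorphism $F_t \simeq\mathrm{Cyl}(\tilde\xi_t)$ in (6). The main obstacle is precisely the continuity estimate at ramification points of $\varphi_t$, where one must match the order of ramification with the vanishing order of $v_t$ on $Q_t$; this is why the specific choice of $v_t$ in Lemma \ref{lemma_v_t} as a distance-squared gradient, rather than an arbitrary field vanishing on $Q_t$, is essential.
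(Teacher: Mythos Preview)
Your approach is essentially the same as the paper's: lift $v_t$ through the covering $\varphi_t$ away from $P_t$, extend by zero, and deduce (5) and (6) from the corresponding facts downstairs. In fact you supply more than the paper does on one point: the paper simply asserts that the lift satisfies (1)--(4) without discussing continuity at the ramification points over $y_j(t)$, whereas you give the local-model estimate matching the order of vanishing of $v_t$ against the blow-up of $(d\varphi_t)^{-1}$. That estimate is correct in this setting because, under the hypothesis that $g$ depends only on $y$, the branching of $\varphi_t$ in the $x$-variable is governed by the holomorphic function $x\mapsto f(x,y)$, so near a branch point one has $\varphi_t = (\text{diffeomorphism})\circ(z\mapsto z^d)$ and both singular values of $d\varphi_t$ are comparable to $|z|^{d-1}$.

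Conversely, the paper is more explicit than you on property (5). Your phrase ``the lifting property of the branched cover together with compactness forces the lift to have a unique limit'' is the right idea but hides the actual mechanism. The paper's argument is: given $z\in\partial F_t$, the projected trajectory converges to some $p\in Q_t$; for any small neighbourhood $U$ of $p$ the preimage $\varphi_t^{-1}(U)$ splits into finitely many disjoint connected components $\tilde U_1,\dots,\tilde U_r$, one around each point of the finite fibre $\varphi_t^{-1}(p)$; the lifted trajectory is eventually contained in $\varphi_t^{-1}(U)$ and, being connected, must lie in a single $\tilde U_j$, forcing convergence to $\tilde p_j$. You should make this step explicit rather than invoke compactness alone, since compactness by itself only gives accumulation points, not a unique limit.
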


\begin{proof}
Since $\varphi_t$ is a covering \index{covering} over $D_t \backslash Q_t$, which is differentiable (in the real sense), we can lift $v_t$ to a vector field $\tilde{v}_t$ in $F_t$ such that $\tilde{v}_t$  satisfies properties (1) to (4).  Let us show that we can choose $\tilde{v}_t$ satisfying also (5):

Fix $z \in \partial F_t$. We want to show that $\lim_{\tau \to \infty} \tilde{q}_t(\tau,z)$ exists, that is, that there exists a point $\tilde{p} \in P_t$ such that for any open neighbourhood $\tilde{U}$ of $\tilde{p}$ in $F_t$ there exists $\theta>0$ such that $\tau>\theta$ implies that $\tilde{q}_t(\tau,z) \in \tilde{U}$. 

From Lemma \ref{lemma_v_t} we know that there exists $p \in Q_t$ such that: 
$$\lim_{\tau \to \infty} q_t(\tau,\varphi_t(z)) = p\,.$$ Then we can fix an arbitrarily small open neighbourhood $U$ of $p$ in $D_t$ and guarantee that there exists $\theta>0$ such that $\tau>\theta$ implies that $q_t(\tau,\varphi_t(z)) \in U$.

Set $\{ \tilde{p}_1, \dots, \tilde{p}_r \} := \varphi_t^{-1}(p)$ and for each $j=1, \dots, r$ let $\tilde{U}_j$ be the connected component of $\varphi_t^{-1}(U)$ that contains $\tilde{p}_j$. Since $q_t(\tau,\varphi_t(z)) = \varphi_t(\tilde{q}_t(\tau,z))$ for any $\tau \geq 0$, we have that $\tau>\theta$ implies that $\varphi_t^{-1} \big( \varphi_t(\tilde{q}_t(\tau,z)) \big) \subset \varphi_t^{-1}(U)$. But then we clearly have that $\tilde{q}_t(\tau,z) \in \tilde{U}_j$ for some $j \in \{1, \dots, r\}$. Hence $\lim_{\tau \to \infty} \tilde{q}_t(\tau,z) = \tilde{p}_j$. This proves $(5)$.

Now we show that $F_t$ is homeomorphic to the mapping cylinder of $\tilde{\xi}_t$. In fact, the integration of the vector field $\tilde{v}_t$ gives a surjective continuous map 
$$\a: [0, \infty] \ \times \ \partial F_t \to F_t$$
that restricts to a diffeomorphism
$$\a_|: [0, \infty) \ \times \ \partial F_t \to F_t \backslash P_t.$$
Since the restriction $\a_{\infty}: \{\infty\} \times \partial F_t \to P_t$ is equal to $\tilde{\xi}_t$, which is differentiable and surjective, it follows that the induced map 
$$[\a_\infty]:  \big( (\{\infty\} \times \partial F_t ) \big/ \sim \big)  \to P_t$$
is a homeomorphism, where $\sim$ is the equivalent relation given by identifying $(\infty, z) \sim (\infty,z')$ if $\a_\infty(z) = \a_\infty(z')$.
Hence the map
$$[\a]:  \big( ([0,\infty] \times \partial F_t) \big/ \sim \big)  \to F_t$$
induced by $\a$ defines a homeomorphism between $F_t$ and the mapping cylinder of $\tilde{\xi}_t$.
\end{proof}

\begin{cor} 
The Fiber $F_t$ deformation retracts to the polyhedron $P_t$.
\end{cor}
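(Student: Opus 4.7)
The plan is to obtain the deformation retraction directly from the flow $\tilde{q}_t$ (equivalently, from the mapping cylinder structure) produced in the preceding proposition. Since the proposition gives a homeomorphism between $F_t$ and the mapping cylinder of $\tilde{\xi}_t : \partial F_t \to P_t$, and any mapping cylinder carries a canonical strong deformation retraction onto its target, the corollary reduces to transporting that standard retraction back to $F_t$.

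Concretely, first I would reparametrize time by choosing a homeomorphism $\sigma : [0,1] \to [0,\infty]$ with $\sigma(0)=0$ and $\sigma(1)=\infty$. Then I would define
\[
H : [0,1] \times F_t \longrightarrow F_t
\]
by $H(s,z) = \tilde{q}_t\bigl(\sigma(s),z\bigr)$ for $z \in F_t \setminus P_t$, by $H(s,z) = z$ for $z \in P_t$, and by $H(1,z) = \tilde{\xi}_t(z)$ when $z \in \partial F_t$ (using the limit clause of item (5) of the proposition for $s=1$ in general). Under the homeomorphism $[\alpha]$ of the proposition, this $H$ corresponds to the obvious linear contraction $(s,(\tau,w)) \mapsto ((1-s)\tau, w)$ of the mapping cylinder $\bigl([0,\infty]\times \partial F_t\bigr)/\!\sim$ onto its quotient base, which is continuous, fixes $P_t$ pointwise, and satisfies $H(0,\cdot)=\mathrm{id}$ and $H(1,F_t)=P_t$.

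The only things to check are therefore the continuity of $H$ at points where $s=1$ or $z \in P_t$ (both handled by item (5) of the proposition, since $\tilde{\xi}_t$ is continuous and the convergence $\tilde{q}_t(\tau,z)\to \tilde{\xi}_t(z)$ holds as $\tau\to\infty$), and the identity $H(s,\cdot)|_{P_t}=\mathrm{id}_{P_t}$, which follows from property (3) of the proposition (the vector field $\tilde v_t$ vanishes on $P_t$, so the flow is stationary there).

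The step I expect to require the most care is verifying continuity of $H$ at the ``limit time'' $s=1$ uniformly across $F_t$; but this is precisely what the mapping-cylinder description $[\alpha]$ in the proposition encodes, so no additional analytic work is needed beyond invoking items (5) and (6) of that proposition. Hence the corollary is an immediate consequence, and in fact $P_t$ is a strong deformation retract of $F_t$, which also establishes property (1) of Definition \ref{defi_LP}.
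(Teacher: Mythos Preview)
Your approach is correct and is exactly what the paper intends: the corollary is left unproved precisely because item~(6) of the preceding proposition identifies $F_t$ with the mapping cylinder of $\tilde{\xi}_t$, and it is a standard fact that a mapping cylinder strongly deformation retracts onto its target. Your flow-based definition of $H$ is the right way to make this explicit.

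One small slip worth correcting: the formula $(s,(\tau,w)) \mapsto ((1-s)\tau, w)$ contracts the cylinder onto $\{0\}\times\partial F_t$, not onto $P_t$, which under $[\alpha]$ sits at $\tau=\infty$. The map on the cylinder actually induced by your $H$ is $(s,(\tau,w))\mapsto(\sigma(s)+\tau,\,w)$, coming from the semigroup property $\tilde{q}_t(\sigma(s),\tilde{q}_t(\tau,w))=\tilde{q}_t(\sigma(s)+\tau,w)$; at $s=1$ this lands in $\{\infty\}\times\partial F_t/\!\sim\;\cong P_t$ as required. This does not affect the substance of your argument.
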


\vspace{0.5cm}
\subsection{Second step: constructing the collapse map}

\ \\

We will do the construction of the vector field $\tilde{v}_t$ above simultaneously for all $t$ in a simple path $\gamma$ in $\BD_{\eta_2}$ joining $0$ and some $t_0 \in \partial \BD_{\eta_2}$, such that $\gamma$ is transverse to $\partial \BD_{\eta_2}$. The natural projection $\pi: \BD_{\eta_1} \times \BD_{\eta_2} \to  \BD_{\eta_2}$ restricted to $\Delta$ induces a ramified covering \index{covering}
$${\pi}_|: \Delta \to \BD_{\eta_2}$$
whose ramification locus is $\{0\} \subset \Delta$ (otherwise we would have Milnor fibres of $\fgbar$ with different homotopy type, which would be an absurd). 

Hence the inverse image of $\gamma \backslash \{0\}$ by this covering defines $k$ disjoint simple paths in $\Delta$, and each one of them is diffeomorphic to $\gamma \backslash \{0\}$. 

Recall that $\lambda_t$ is a point in $D_t \setminus \{y_1(t), \dots, y_k(t)\}$ and that $\delta(y_j(t))$ is a simple path starting at $\lambda_t$ and ending at $y_j(t)$, for each $j = 1, \dots, k$, such that two of them intersect only at $\lambda_t$. Then the set 
$$\Lambda := \bigcup_{t \in \gamma} \lambda_t$$
defines a simple path in $\BD_{\eta_1} \times \gamma$ such that $\Lambda \cap \Delta = \{0\}$. Moreover, we can choose the paths $\delta(y_j(t))$ in such a way that the set
$$T_j := \bigcup_{t \in \gamma} \delta(y_j(t)) \,,$$
forms a triangle and $T_j \backslash \{0\}$ is differentially immersed in 
$$\bigcup_{t \in \gamma} D_t = \BD_{\eta_1} \times \gamma \, ,$$
for each $j \in \{1, \dots, k\}$.
The intersection of any two triangles $T_j$ and $T_{j'}$ is the path $\Lambda$, for $j,j' \in \{1, \dots, k \}$ with $j \neq j'$. Set
$$Q := \bigcup_{j=1}^k T_j.$$

Now, let $\mathcal{V}$ be a vector field in $\BD_{\eta_1} \times \gamma$ such that $\mathcal{V}$ is:
\begin{itemize}
\item[$\bullet$] continuous;
\item[$\bullet$] null over $Q$;
\item[$\bullet$] differentiable over $(\BD_{\eta_1} \times \gamma) \backslash Q$;
\item[$\bullet$] transversal to $\partial \BD_{\eta_1} \times \gamma$; and such that
\item[$\bullet$] the projection of $\mathcal{V}$ on $\gamma$ is null.
\end{itemize}

Then the associated flow $w: [0, \infty) \ \times \ \big( (\BD_{\eta_1} \times \gamma) \backslash Q \big) \to \BD_{\eta_1} \times \gamma$ defines a map
$$ 
\begin{array}{cccc}
\xi \ : & \! \partial \BD_{\eta_1} \times \gamma & \! \longrightarrow & \! Q \\
& \! z & \! \longmapsto & \! \displaystyle \lim_{\tau \to \infty} w(\tau,z)
\end{array} 
,$$
such that $\xi$ is continuous, surjective and differentiable.

For any real $A>0$, set
$$V_A(Q):=  (\BD_{\eta_1} \times \gamma) \backslash w \big( [0,A) \times \partial \BD_{\eta_1} \times \gamma \big),$$
a closed neighbourhood of $Q$ in $\BD_{\eta_1} \times \gamma$. Note that $\partial V_A(Q)$ is a differentiable manifold that fibres over $\gamma$ with fibre a circle, by the restriction of the projection $\pi$. Moreover, $\BD_{\eta_1} \times \gamma$ is clearly the mapping cylinder of $\xi$. Now
set 
$$F_\gamma := \phi^{-1}(\BD_{\eta_1} \times \gamma) \cap \B_\e.$$
Since 
$$\phi_|: F_\gamma \backslash \phi^{-1}(Q) \to (\BD_{\eta_1} \times \gamma) \backslash Q$$
is a fibre bundle, it follows that $\phi^{-1}(\partial V_A(Q))$ is a differentiable submanifold of $F_\gamma$ which is a fibre bundle over $\gamma$. 
Now we set
$$P_\gamma := \phi^{-1}(Q),$$
which we call the {\it collapse cone of $\fgbar$ along $\gamma$}. Let $\theta$ be a vector field in $\gamma$ that goes from $t_0$ to $0$ in time $a>0$. 

Set
$Z := F_\gamma \backslash P_\gamma.$
Since 
$$Z = \phi^{-1} \big( (\BD_{\eta_1} \times \gamma) \backslash Q \big) \stackrel{\phi}{\longrightarrow} (\BD_{\eta_1} \times \gamma) \backslash Q \stackrel{\pi}{\longrightarrow} \gamma\,,$$
and
$$\phi^{-1} \big( \partial V_A(Q) \big) \stackrel{\phi}{\longrightarrow} \partial V_A(Q) \stackrel{\pi}{\longrightarrow} \gamma\,, $$
are differential fibre bundles, we can lift $\theta$ to obtain a vector field $\mathcal{E}$ such that:
\begin{itemize}
\item[$\bullet$] $\mathcal{E}$ is differentiable;
\item[$\bullet$] $\mathcal{E}$ is tangent to $\phi^{-1} \big( \partial V_A(Q) \big)$, for any $A > 0$.
\end{itemize}

Then the associated flow $h: [0,a] \times Z \to Z$ defines a $C^\infty$-diffeomorphism $\Psi$ from $F_{t_0} \backslash P_{t_0}$ to $F_0 \backslash \{0\}$ that extends to a continuous map from $F_{t_0}$ to $F_0$ and that sends $P_{t_0}$ to $\{0\}$. Hence we have proved:

\begin{cor} \label{theo_LPfgbar}
Let $f,g: (\bC^2,0) \to (\bC,0)$ be two holomorphic germs  such that the map-germ  $\fgbar: (\bC^2,0) \to (\bC,0)$ has an isolated critical point at $0 \in \bC^2$. Suppose that either $f$ or $g$ depends only on one variable. Then $\fgbar$ admits a one-dimensional L\^e polyhedron, that is, there exists a one-dimensional polyhedron $P$ in the local Milnor fibre $F_{\fgbar}$ of $\fgbar$, such that $F_{\fgbar}$ deformation retracts to $P$ and there exists a continuous map $\Psi: F_{\fgbar} \to V$, where $V$ is the local special fibre, such that $P = \Psi^{-1}(0)$ and $\Psi: F_{\fgbar} \setminus P \to V \setminus \{0\}$ is a homeomorphism.
\end{cor}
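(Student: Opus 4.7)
The plan is to assemble the pieces already constructed in this section into a proof by verification, checking that the hypotheses of Definition \ref{defi_LP} are satisfied by the polyhedron $P_{t_0}$ and the collapsing map $\Psi$ built in the two preceding subsections. First I would invoke Lemma \ref{polar curve}, whose conclusions are precisely what the ``one-variable'' hypothesis is designed to yield: the relative polar curve $\Gamma$ of $\phi(x,y) = (y, \fgbar(x,y))$ is a genuine complex curve, and the discriminant $\Delta = \phi(\Gamma)$, though not complex analytic, admits Puiseux-type parametrizations branch by branch and meets each slice $D_t = \BD_{\eta_1} \times \{t\}$ in a finite set of points $\{y_1(t), \dots, y_k(t)\}$. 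This is the only place where the extra hypothesis is essential.

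With this discreteness in hand, for a chosen small $t_0 \in \partial \BD_{\eta_2}$ I would set $P_{t_0} := \varphi_{t_0}^{-1}(Q_{t_0})$, where $Q_{t_0}$ is the union of simple paths from a base point $\lambda_{t_0}$ to each $y_j(t_0)$, as in the first subsection. Property (1) of Definition \ref{defi_LP}, that $P_{t_0}$ is a deformation retract of $F_{t_0} = F_f$, follows directly from the proposition in that subsection: Lemma \ref{lemma_v_t} builds an integrable vector field $v_{t_0}$ on $D_{t_0}$ retracting it onto $Q_{t_0}$; lifting through the finite covering $\varphi_{t_0}: F_{t_0} \setminus P_{t_0} \to D_{t_0} \setminus Q_{t_0}$ gives $\tilde v_{t_0}$, and its flow identifies $F_{t_0}$ with the mapping cylinder of $\tilde \xi_{t_0}: \partial F_{t_0} \to P_{t_0}$, yielding the required deformation retraction.

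For property (2), the existence of the collapsing map $\Psi: F_{t_0} \to V$, I would appeal to the second subsection, applied to a simple path $\gamma \subset \BD_{\eta_2}$ joining $t_0$ to $0$ transversally to $\partial \BD_{\eta_2}$. The $t$-wise constructions fit together into the collapse cone $P_\gamma = \phi^{-1}(Q) \subset F_\gamma$, and the vector field $\mathcal{E}$ obtained by lifting the unit vector field on $\gamma$ (pointing from $t_0$ to $0$), chosen so as to be tangent to every tube $\phi^{-1}(\partial V_A(Q))$, integrates to a flow $h: [0,a] \times Z \to Z$ on $Z = F_\gamma \setminus P_\gamma$. The time-$a$ map of this flow is a diffeomorphism $F_{t_0} \setminus P_{t_0} \to F_0 \setminus \{0\}$ which, because the tubes $\partial V_A(Q)$ collapse onto $Q$ as $A \to \infty$ and $\phi$ is continuous, extends by continuity to a map $\Psi: F_{t_0} \to V$ collapsing $P_{t_0}$ to $0$. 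This is precisely the collapsing map required by Definition \ref{defi_LP}.

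The only nontrivial step is ensuring $\Delta$ is one-real-dimensional, and this is already isolated in Lemma \ref{polar curve}; everything else is the routine assembly of lifted flows, formally identical to L\^e's treatment in \cite{Le1} once the discreteness of $\Delta \cap D_t$ is in place. The remaining care is to verify that the lifting of $\mathcal{V}$ to $\tilde v_t$ and of $\theta$ to $\mathcal{E}$ can indeed be made simultaneously continuous across $\gamma$ and tangent to the correct tubes; this is handled exactly as in the real-analytic fibre bundle arguments already used above, since $\phi$ restricts to a fibre bundle over $(\BD_{\eta_1} \times \gamma) \setminus Q$ and to a differentiable fibration on each $\partial V_A(Q)$.
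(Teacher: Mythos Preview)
Your proposal is correct and follows essentially the same route as the paper: the corollary is stated there immediately after the two subsections with the phrase ``Hence we have proved'', so its proof is precisely the assembly of Lemma~\ref{polar curve}, Lemma~\ref{lemma_v_t}, the proposition giving the deformation retraction via the lifted flow $\tilde v_t$, and the collapse-cone construction along $\gamma$ yielding $\Psi$. Your write-up is in fact a faithful summary of that narrative, with the role of the one-variable hypothesis correctly isolated in Lemma~\ref{polar curve}.
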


\vspace{.5cm}
\section{The degeneration of the boundary of the Milnor fibre of real analytic map germs of the type $\fgbar: (\bC^n,0) \to (\bC,0)$}
\label{section_degeneration}

In this section we describe the degeneration of the boundary of the Milnor fibre to the link of a real analytic map-germ $\fgbar: (\bC^n,0) \to (\bC,0)$, with $n \geq 3$, given by two reduced holomorphic germs $f,g: (\bC^n,0) \to (\bC,0)$ satisfying the following hypothesis:

\begin{itemize}
\item[$(A)$] $\fgbar$ has an isolated critical value at $0 \in \bC$;
\item[$(B)$] The critical set $\Sigma$ of $\fgbar$ satisfies that either $\Sigma$ or $\Sigma \backslash \{0\}$ is a stratum of a Whitney stratification of $(fg)^{-1}(0)$;
\item[$(C)$] Either $g$ is constant (and hence $\fgbar$ is holomorphic) or the function $f$ (or $g$) depends on only one variable and the map $(f,g): (\bC^n,0) \to (\bC^2,0)$ is a complete intersection, that is, the intersection $f^{-1}(0) \cap g^{-1}(0)$ has complex dimension $n-2$.
\end{itemize}

For example, every holomorphic
$$f: (\bC^n,0) \to (\bC,0)$$ 
 with critical locus $\Sigma$ a complex curve, satisfies these conditions (with $g\equiv 1$). 

\medskip

We have:

\begin{theo} \label{theo_degeneration}
Let $f,g: (\bC^n,0) \to (\bC,0)$ be two reduced holomorphic map-germs  such that the map-germ $\fgbar: (\bC^n,0) \to (\bC,0)$ satisfies the hypotheses $(A)$, $(B)$ and $(C)$. Then, for every $t\neq 0$ sufficiently small, there exist:
\begin{itemize}
\item[$(i)$] A small neighbourhood $W$ of the link of the critical locus $L(\Sigma)$ in the boundary of the Milnor ball $\BS_\e$ such that the part of $L_t$ (the boundary of the Milnor fibre of $\fgbar$) that is not inside $\mathring{W}$ is diffeomorphic to the part of $L_0$ (the link of $\fgbar$) that is not inside $\mathring{W}$.
\item[$(ii)$] A polyhedron $P_t$ in $W_t = L_t \cap W$, of real dimension $n+k-2$, such that $W_t$ deformation retracts to $P_t$. 
\item[$(iii)$] A continuous map $\Psi_t: W_t \to W_0 = L_0 \cap W$ which restricts to a homeomorphism from $W_t \backslash P_t$ to $W_0 \backslash L(\Sigma)$ and sends $P_t$ to $L(\Sigma)$.
\end{itemize}
\end{theo}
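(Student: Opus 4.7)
The plan is to combine the fibre bundle structure on the vanishing zone furnished by Theorem \ref{theo_c4} with a fibrewise application of the L\^e polyhedron construction from \cite{Le1} (when $\fgbar$ is holomorphic) and Corollary \ref{theo_LPfgbar} (when $f$ or $g$ depends on a single variable).

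Part (i) is essentially automatic. Hypothesis (A) says $\fgbar$ has an isolated critical value at $0$ and, by \cite{PS2}, $\fgbar$ has the Thom $a_f$-property, so Theorem \ref{theo_c4}(a) produces the neighbourhood $W$ with $L_t \setminus \mathring{W}$ homeomorphic to $L_0 \setminus \mathring{W}$. Hypothesis (B) implies in particular that $\Sigma$ has at most an isolated singularity at $0$, so Theorem \ref{theo_c4}(b,c) shows that both $W_t$ and $W_0$ can be chosen to be fibre bundles over $L(\Sigma)$, with respective fibres $F_s := f_s^{-1}(t) \cap \B(s)$ and $V_s := f_s^{-1}(0) \cap \B(s)$; here $f_s$ is the restriction of $\fgbar$ to the transverse disk $\B(s)$ at $s \in L(\Sigma)$, and $f_s$ has an isolated critical point at $s$. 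Denote the bundle projections by $p_t: W_t \to L(\Sigma)$ and $p_0: W_0 \to L(\Sigma)$.

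Next I would perform the L\^e polyhedron construction fibrewise. Under hypothesis (C) two cases arise. If $g$ is constant, then $\fgbar$ is holomorphic and $f_s$ is a holomorphic function-germ on $\B(s)$ with an isolated singularity; L\^e's construction \cite{Le1} then yields a L\^e polyhedron $P_s \subset F_s$ together with a collapsing map $\xi_s: F_s \to V_s$ sending $P_s$ to $s$ and restricting to a homeomorphism $F_s \setminus P_s \to V_s \setminus \{s\}$. In the other case, the complete-intersection assumption combined with $f$ (or $g$) depending on only one variable forces $\Sigma$ to be of complex codimension two away from $0$, so $\B(s)$ has complex dimension $2$ and $f_s$ is a germ of the form handled by Corollary \ref{theo_LPfgbar}, yielding a one-dimensional polyhedron $P_s$ and a collapsing map $\xi_s$ with the same properties. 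In either case the construction depends on auxiliary choices: a base point $\lambda_s$ in the discriminant plane, simple paths $\delta_j(s)$ to the points of the polar image in the slice, tubular neighbourhoods of those paths, and the associated vector fields produced as in Lemma \ref{lemma_v_t}.

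The remaining step is to assemble these fibrewise data into the global $P_t$ and $\Psi_t$. I would choose a locally finite open cover $\{U_\alpha\}$ of $L(\Sigma)$ trivializing both bundles $W_t \to L(\Sigma)$ and $W_0 \to L(\Sigma)$, and on each $U_\alpha$ arrange the auxiliary choices above to depend real-analytically on $s$; this is possible because the polar curve of $f_s$ and its discriminant vary real-analytically with $s$. The resulting families $\{P_s\}_{s \in U_\alpha}$ and $\{\xi_s\}_{s \in U_\alpha}$ then define a sub-polyhedron of $p_t^{-1}(U_\alpha)$ and a continuous map $p_t^{-1}(U_\alpha) \to p_0^{-1}(U_\alpha)$; gluing over the cover, as in the parametric argument at the end of Section \ref{section_LP-real}, produces the global polyhedron $P_t = \bigcup_{s \in L(\Sigma)} P_s$ and collapse map $\Psi_t: W_t \to W_0$ defined fibrewise by $\Psi_t|_{F_s} = \xi_s$. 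The retract property in (ii) and the collapse property in (iii) hold fibrewise and so globally by the bundle structure. The main technical obstacle is exactly this parametric coherence: one must verify that the fibrewise L\^e polyhedron construction can be performed compatibly along all of $L(\Sigma)$, which ultimately reduces to the real-analytic dependence of the polar image on $s$ together with the Thom $a_f$-property, ensuring that nothing degenerates as $s$ varies along the (compact) link of the singular locus.
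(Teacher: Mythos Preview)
Your proposal is correct and follows the paper's overall strategy: part~(i) via Theorem~\ref{theo_c4}, then the fibre-bundle structure of $W_t$ and $W_0$ over $L(\Sigma)$, then a fibrewise L\^e polyhedron construction (either \cite{Le1} in the holomorphic case or Corollary~\ref{theo_LPfgbar} in the two-variable case, the latter forced by hypothesis~(C) giving $k=n-2$).

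The one genuine difference is in the global assembly. The paper does \emph{not} vary the auxiliary choices analytically in $s$ and glue over a cover; instead it fixes a single base point $s_i$ in each component $L(\Sigma_i)$, carries out the L\^e construction once there to obtain $P_{t,s_i}$ and $\Psi_{t,s_i}$, and then \emph{transports} these to every other fibre via the bundle homeomorphisms $h_{t,s}$ and $h_{0,s}$, setting $P_{t,s}:=h_{t,s}^{-1}(P_{t,s_i})$ and $\Psi_{t,s}:=h_{0,s}^{-1}\circ\Psi_{t,s_i}\circ h_{t,s}$. This sidesteps entirely the ``parametric coherence'' issue you flag as the main obstacle: there is no need to track how the polar image or the paths $\delta_j$ move with $s$, and no gluing to perform. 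Your route would also work but is more laborious, and your pointer to ``the parametric argument at the end of Section~\ref{section_LP-real}'' is slightly off---that argument parametrizes the construction over a path $\gamma$ in the target disk $\BD_{\eta_2}$ (to build the collapse map), not over the link $L(\Sigma)$.
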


We already know from \cite{PS2} that the hypothesis $(A)$ implies  that there exist sufficient small positive reals $0 < \eta << \e$ such that the restriction 
$$\fgbar_|: (\fgbar)^{-1}(\BD_\eta^*) \cap \B_\e \to \BD_\eta^*\,,$$
is a fibre bundle (as we have already seen before). We also have: 

\begin{lemma} The hypothesis $(A)$ implies:
$$\Sigma = \Sigma(fg) = \Sigma(f) \cup \Sigma(g) \cup \big( f^{-1}(0) \cap g^{-1}(0) \big)\,,$$ so, in particular, $\Sigma$ is a complex variety.
\end{lemma}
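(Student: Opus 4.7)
The plan is to reduce the claim to a direct computation of the real differential of $F:=\fgbar$, using hypothesis $(A)$ to confine the critical set $\Sigma$ to the zero locus $F^{-1}(0) = f^{-1}(0)\cup g^{-1}(0)$. Once that confinement is in place, the stratification of $F^{-1}(0)$ into the three pieces $\{f=0,\,g\neq 0\}$, $\{f\neq 0,\,g=0\}$ and $\{f=g=0\}$ determines $\Sigma$ piece by piece, and reassembling the contributions yields the stated formula; the complex-analytic character of $\Sigma$ is then immediate.

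First I would write $dF = \bar g\,df + f\,d\bar g$ and recall that $z\in\Sigma$ precisely when the complex-valued real $2$-form $dF\wedge d\bar F$ vanishes at $z$. By $(A)$, $\Sigma\subseteq F^{-1}(0)$ in a sufficiently small Milnor ball, so it suffices to evaluate $dF\wedge d\bar F$ on the three strata above. On $\{f=0,\,g\neq 0\}$ the formula collapses to $dF = \bar g\,df$, giving $dF\wedge d\bar F = |g|^{2}\,df\wedge d\bar f$; since $f$ is holomorphic, $df$ has pure type $(1,0)$, so $df\wedge d\bar f$ vanishes if and only if $df=0$. Thus the critical points on this stratum are exactly $\Sigma(f)\cap\{g\neq 0\}$, where $\Sigma(f):=\{f=0,\,df=0\}$ is the singular set of the hypersurface $f^{-1}(0)$. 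A symmetric argument on $\{f\neq 0,\,g=0\}$ gives $\Sigma(g)\cap\{f\neq 0\}$, and on the deepest stratum $\{f=g=0\}$ both summands of $dF$ vanish, so every such point is automatically critical.

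Assembling the three contributions yields
\[
\Sigma \;=\; \bigl(\Sigma(f)\cap\{g\neq 0\}\bigr)\,\cup\,\bigl(\Sigma(g)\cap\{f\neq 0\}\bigr)\,\cup\,\bigl(f^{-1}(0)\cap g^{-1}(0)\bigr),
\]
and since $\Sigma(f)\cap\{g=0\}\subseteq f^{-1}(0)\cap g^{-1}(0)$ (and symmetrically for $g$), this set coincides with $\Sigma(f)\cup\Sigma(g)\cup\bigl(f^{-1}(0)\cap g^{-1}(0)\bigr)$. The second equality $\Sigma(fg)=\Sigma(f)\cup\Sigma(g)\cup(f^{-1}(0)\cap g^{-1}(0))$ is the classical description of the singular locus of a union of two reduced hypersurfaces with no common irreducible component, a condition implicit in the setting (a shared component would sit entirely inside $f^{-1}(0)\cap g^{-1}(0)$ and cause no trouble). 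Since each set on the right is complex analytic, $\Sigma$ is a complex variety.

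The step I expect to be the main technical obstacle is the verification on $\{f=0,\,g\neq 0\}$ that the vanishing of $df\wedge d\bar f$ as a real $2$-form forces $df=0$, rather than some weaker $\bR$-linear dependence between $df$ and $d\bar f$; this is the one place where holomorphy of $f$ and $g$ is genuinely used. Writing $df=\sum a_i\,dz_i$ one has $df\wedge d\bar f = \sum_{i,j} a_i\bar a_j\,dz_i\wedge d\bar z_j$, whose diagonal coefficients $|a_i|^{2}$ must individually vanish, forcing each $a_i=0$. Once this point is settled, the rest is a routine bookkeeping of the three cases.
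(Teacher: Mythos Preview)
Your proof is correct and follows essentially the same route as the paper: use hypothesis $(A)$ to confine $\Sigma$ to $(fg)^{-1}(0)$, stratify into $\{f=0,\,g\neq 0\}$, $\{f\neq 0,\,g=0\}$, $\{f=g=0\}$, and analyze the differential of $\fgbar$ on each stratum. The only cosmetic difference is that the paper carries out the computation via the explicit $2\times 2n$ Jacobian matrix and its minor equations, whereas you package the same computation through the criterion $dF\wedge d\bar F=0$ and the decomposition $dF=\bar g\,df+f\,d\bar g$; the paper also proves the identity $\Sigma(fg)=\Sigma(f)\cup\Sigma(g)\cup(f^{-1}(0)\cap g^{-1}(0))$ by hand rather than citing it.
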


\begin{proof}
In fact, if we consider coordinates $z_1, \bar{z_1}, \dots, z_n, \bar{z_n}$ for $\bC^n$, then
$$ \fgbar = (\Re \fgbar, \Im \fgbar) = \frac{1}{2} \biggl( \fgbar + \bar{f}g, \frac{1}{i} (\fgbar - \bar{f}g) \biggr) \,,$$
and the Jacobian matrix of the function $\fgbar$ is given  by:
$$
\begin{pmatrix}
\frac{\partial f}{\partial z_1} \bar{g} + \bar{f} \frac{\partial g}{\partial z_1} & f \bar{\frac{\partial g}{\partial z_1}} + g \bar{\frac{\partial f}{\partial z_1}} &  \dots & \frac{\partial f}{\partial z_n} \bar{g} + \bar{f} \frac{\partial g}{\partial z_n} & f \bar{\frac{\partial g}{\partial z_n}} + g \bar{\frac{\partial f}{\partial z_n}}  \\
\frac{\partial f}{\partial z_1} \bar{g} - \bar{f} \frac{\partial g}{\partial z_1} & f \bar{\frac{\partial g}{\partial z_1}} - g \bar{\frac{\partial f}{\partial z_1}} & \dots & \frac{\partial f}{\partial z_n} \bar{g} - \bar{f} \frac{\partial g}{\partial z_n} & f \bar{\frac{\partial g}{\partial z_n}} - g \bar{\frac{\partial f}{\partial z_n}}  
\end{pmatrix} \;.
$$
So the points in $\Sigma$ are determined by the following equations:
\begin{itemize}
\item[-] $fg (\frac{\partial f}{\partial z_i} \frac{\partial g}{\partial z_j} - \frac{\partial f}{\partial z_j} \frac{\partial g}{\partial z_i}) = 0$, for any $i \neq j$;
\item[-] $|f \frac{\partial g}{\partial z_i}| = |g \frac{\partial f}{\partial z_i}| $, for any $i= 0, \dots, n$;
\item[-] $|f|^2 \frac{\partial g}{\partial z_i} \bar{\frac{\partial g}{\partial z_j}} = |g|^2 \frac{\partial f}{\partial z_i} \bar{\frac{\partial f}{\partial z_j}}$, for any $i
\neq j$.
\end{itemize}
Since $\fgbar$ has an isolated critical value, then $\Sigma(\fgbar) \subset (fg)^{-1}(0)$. Define $\g = \big( f^{-1}(0) \cap g^{-1}(0) \big)$, a complex variety in $\bC^n$ of codimension $2$.

From the equations above we know that $\g \subseteq \Sigma$. If $x \in \Sigma(f)$, then $f(x)=0$ and $\frac{\partial f}{\partial z_i} =0$ for any $i=1, \dots, n$; hence $x \in \Sigma$ and therefore $\Sigma(f) \subseteq \Sigma$. In the same way we get $\Sigma(g) \subseteq \Sigma$. So $\Sigma \supseteq \g \cup \Sigma(f) \cup \Sigma(g)$. 

Now, if $x \in \Sigma$ and $x \notin \g$, let us assume  $f(x)=0$ and $g(x)\neq 0$. Then by the third equation above we see that $\frac{\partial f}{\partial z_i} = 0$, for any $i =1, \dots, n$, and hence $x \in \Sigma(f)$ (or if $f(x) \neq 0$ and $g(x)=0$, then $x \in \Sigma(g)$). Therefore $\Sigma = \g \cup \Sigma(f) \cup \Sigma(g)$.

Note that $\Sigma(fg) = \{ (\frac{\partial f}{\partial z_1}g +
f\frac{\partial g}{\partial z_1}, \dots, \frac{\partial f}{\partial z_n}g +
f\frac{\partial g}{\partial z_n}) = \underbar{0} \}$. Then
$\Sigma(f) \subset \Sigma(fg)$, $\Sigma(g) \subset \Sigma(fg)$ and
$\g \subset \Sigma(fg)$. So we get  that
$\Sigma \subseteq \Sigma(fg)$. 

On the other hand, if $x \in \Sigma(fg)$ and $x \notin \g$, then either $f(x)=0$ or $g(x)=0$. Suppose $f(x)=0$. Then $\frac{\partial f}{\partial
z_i}(x)= 0$ for any $i=1, \dots, n$, and hence $x \in \Sigma(f)$. In the same way, if $g(x)=0$, then $x \in \Sigma(g)$, so $\Sigma(fg) \subseteq
\Sigma$. Hence  $\Sigma = \Sigma(fg)$. 
\end{proof}

The next lemma is immediate from the previous discussion (see Theorem  \ref{theo_c4}):

\begin{lemma} The hypothesis $(B)$ implies that the critical locus $\Sigma$ of $\fgbar$ is either smooth or it is a complex analytic variety of dimension $k\leq n-2$ with an
 isolated singularity and one has:
 \begin{enumerate}
 \item The link  $L(\Sigma)$ is a smooth manifold and the vanishing zone $W$ is a fibre bundle over $L(\Sigma)$ with fibre a $(2n-2k)$-dimensional disk in $\BS_\e$. 
 \item For any $t \in \BD_\eta^*$, we can define the sets $F_t$, $F_0$, $L_t$, $L_0$, $W_t$ and $W_0$ as before, and   $W_t$ and $W_0$ are fibre bundles over $L(\Sigma)$.
 \end{enumerate}
\end{lemma}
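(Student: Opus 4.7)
The plan is to unpack hypothesis $(B)$ step-by-step and then invoke Theorem~\ref{theo_c4} directly, since parts $(b)$ and $(c)$ of that theorem were designed for exactly this kind of set-up. The authors' own remark ("immediate from the previous discussion") signals that the lemma should be essentially a repackaging, so the main task is just to check that the hypotheses of Theorem~\ref{theo_c4} are satisfied with the correct dimensions.

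First I would identify $\Sigma$ as a complex analytic variety of complex dimension at most $n-2$. By the preceding lemma,
$$\Sigma = \Sigma(fg) = \Sigma(f) \cup \Sigma(g) \cup \bigl(f^{-1}(0) \cap g^{-1}(0)\bigr),$$
so $\Sigma$ is certainly a complex analytic subset of $\bC^n$. Hypothesis $(A)$ forces $\Sigma \subset (fg)^{-1}(0)$, while hypothesis $(C)$ says that $f$ and $g$ are reduced and share no common irreducible component (this is the complete intersection condition), so $fg$ is itself a reduced holomorphic germ. The critical locus of a reduced holomorphic function-germ that is contained in its zero set coincides with the singular locus of that hypersurface, and therefore has complex codimension at least two. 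Hence $\dim_\bC \Sigma \leq n-2$; set $k := \dim_\bC \Sigma$, so the real dimension of $\Sigma$ is $r=2k$.

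Next I would use hypothesis $(B)$ to extract smoothness. Strata of a Whitney stratification are by definition smooth manifolds, so if $\Sigma$ itself is a stratum then $\Sigma$ is smooth everywhere, while if only $\Sigma \setminus \{0\}$ is a stratum then $\Sigma$ has at most an isolated singularity at $0$. In either case, for $\e$ sufficiently small the sphere $\bS_\e$ is transverse to the (smooth) stratum $\Sigma$ (or $\Sigma\setminus\{0\}$) by the standard Milnor transversality argument for small spheres against complex analytic sets. Consequently $L(\Sigma) = \Sigma \cap \bS_\e$ is a smooth closed submanifold of $\bS_\e$.

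With $\Sigma$ having at most an isolated singularity, part $(b)$ of Theorem~\ref{theo_c4} produces the vanishing zone $W$ as a fibre bundle over $L(\Sigma)$ whose fibre is a disk of real dimension $m-r = 2n-2k$; this is claim~$(1)$. For claim~$(2)$, hypothesis $(B)$ is precisely the condition "$\Sigma \setminus \{0\}$ is a stratum of a Whitney stratification of $f$" appearing in part~$(c)$ of Theorem~\ref{theo_c4}, so that part directly yields the fibre bundle structure of both $W_0$ and $W_t$ over $L(\Sigma)$. The sets $F_t, F_0, L_t, L_0$ are defined exactly as in Section~\ref{section_BMF} via the Milnor-L\^e fibration, whose existence under hypothesis $(A)$ was recalled immediately before the lemma. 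The only point that is not completely routine is making sure the fibre dimension reads $2n-2k$ rather than $2n-2k-1$ or similar, and this is handled by the dimension count $m-r = 2n - 2\dim_\bC\Sigma$ from the first paragraph.
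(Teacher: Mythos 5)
Your proposal is correct and follows the same route the paper takes: the authors state that the lemma is ``immediate from the previous discussion (see Theorem \ref{theo_c4})'', and your argument simply fills in what that phrase means, verifying that hypothesis $(B)$ puts one in the setting of parts $(b)$ and $(c)$ of Theorem~\ref{theo_c4} and supplying the dimension count $m-r=2n-2k$. The extra care you take to justify $\dim_\bC\Sigma\leq n-2$ (via hypothesis $(A)$, the reducedness of $f,g$, and the previous lemma $\Sigma=\Sigma(fg)$) is a welcome addition that the paper leaves implicit.
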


Now let
$$\Sigma = \Sigma_1 \cup \dots \cup \Sigma_r$$ 
be the decomposition of $\Sigma$ into irreducible components. Then $L(\Sigma)$ has a corresponding decomposition into disjoint components:
$$L(\Sigma) =L(\Sigma_1) \sqcup \dots \sqcup L(\Sigma_r).$$
Hence $W$ has a decomposition into disjoint connected components given by
$$W = W[1] \sqcup \dots \sqcup W[r]$$
and $W_t$ and $W_0$ have decompositions into disjoint connected components given by
$$W_t = W_t[1] \sqcup \dots \sqcup W_t[r]$$
and
$$W_0 = W_0[1] \sqcup \dots \sqcup W_0[r].$$

Then each connected component $W_0[i]$ is a fibre bundle over the $(2k-1)$-manifold $L(\Sigma_i)$ and each connected component $W_t[i]$ is a fibre bundle over $L(\Sigma_i)$:

$$\xymatrix{ 
(\fgbar)_s^{-1}(t) \ \ar@{^{(}->}[r] & W_t[i] \ar[d]^{p} \\ 
	&	L(\Sigma_i) 
}
\hspace{1cm}
\xymatrix{ 
(\fgbar)_s^{-1}(0)  \ \ar@{^{(}->}[r] & W_0[i] \ar[d]^{p} \\ 
	&	L(\Sigma_i) 
}
$$
where the ball $\B_\theta(s)^{2n-2k} \subset \BS_\e^{2n-1}$ can be taken to be a Milnor ball for $(\fgbar)_s$ (see the claim of the proof of Theorem \ref{equiv}), and $(\fgbar)_s: \B_\theta(s) \to \bC$ has an isolated singularity at $0 \in \B_\theta(s)$. 

From now on we  consider the Milnor fibration of $\fgbar$ defined on a polydisk instead of on a ball. That is, we consider the polydisk 
$$\Delta_\e := \{ (z_1, \dots, z_n) \in \bC^n : max \{ |z_1|, \dots, |z_n| \} \leq \e \}\,,$$
instead of the ball $\B_\e$. Then we can choose a coordinate system $(z_1, \dots, z_n)$ of $\bC^n$ such that there exists $\e>0$ such that for any $\e' \leq \e$ the boundary of the polydisk $\Delta_\e$ intersects $\Sigma$ transversally at the open face
$$ \{ (z_1,\dots,z_n) \in \bC^n : max \{|z_1|, \dots, |z_{n-1}|\} < \e, |z_n| = \e \},$$
which contains $L(\Sigma)$. 

Then for each $s \in L(\Sigma)$, we can consider $(\fgbar)_s$ to be the real analytic isolated singularity function germ 
$$(\fgbar)_s: (H_s,s) \to (\bC,0)\,,$$
given by the restriction of $\fgbar$ to $H_s$, where $H_s$ is the $(n-k)$-dimensional affine hyperplane of $\bC^n$ passing through $s$ and parallel to $\{z_{n-k+1} = z_{n-k+2} \dots = z_n = 0\}$.

We remark  that $(i)$ of Theorem \ref{theo_degeneration} was proved already in Lemma \ref{lemma_vz}. We shall prove $(ii)$ and $(iii)$ of Theorem  \ref{theo_degeneration}. First we need to construct a L\^e Polyhedron for each isolated singularity $(\fgbar)_s$.

Notice that if $g$ is constant, then  $\fgbar$ is holomorphic, and the construction of a L\^e polyhedron for $(\fgbar)_s$ is given by L\^e's theorem in \cite{Le1}. So we 
assume that neither $g$ nor $f$ are constant.

By hypothesis $(C)$ we have that $\big( f^{-1}(0) \cap g^{-1}(0) \big)$ has complex dimension $(n-2)$. Since $\big( f^{-1}(0) \cap g^{-1}(0) \big)$ is contained in $\Sigma$, we must have $k \geq n-2$. But since $k \leq n-2$, it follows that $k=n-2$. Then $H_s$ is a $2$-dimensional affine hyperplane and the construction of a L\^e polyhedron for $(\fgbar)_s$ is given by Theorem \ref{theo_LPfgbar}.

Then we obtain: 

\begin{itemize}
\item[$\bullet$] A polyhedron $P_{t,s}$ of real dimension $n-k-1$ in the compact real surface $(\fgbar)_s^{-1}(t)$ such that $(\fgbar)_s^{-1}(t)$ deformation retracts to $P_{t,s}$. 
\item[$\bullet$] A continuous map $\Psi_{t,s}: (\fgbar)_s^{-1}(t) \to (\fgbar)_s^{-1}(0)$ which sends $P_{t,s}$ to $\{0\}$ and such that $\Psi_{t,s}$ restricts to a homeomorphism from $(\fgbar)_s^{-1}(t) \backslash P_{t,s}$ to $(\fgbar)_s^{-1}(0) \backslash \{0\}$.
\end{itemize}
Now fix $s_i \in L(\Sigma_i)$ and $t \in \BD_\eta^*$. Consider $P_{t,s_i}$ a L\^e Polyhedron for $(\fgbar)_{s_i}$ and $\Psi_{t,s_i}$ a collapsing map as in the previous lemma. Since $W_t[i]$ is a fibre bundle over $L(\Sigma_i)$, it follows that for any $s \in L(\Sigma_i) \backslash \{s_i\}$ there exists a homeomorphism 
$$h_{t,s}: \big( \B_\theta(s), (\fgbar)_{s}^{-1}(t) \big) \to \big( \B_\theta(s_i), (\fgbar)_{s_i}^{-1}(t) \big)$$
and a homeomorphism 
$$h_{0,s}: \big( \B_\theta(s), (\fgbar)_{s}^{-1}(0) \big) \to \big( \B_\theta(s_i), (\fgbar)_{s_i}^{-1}(0) \big),$$
where $\B_\theta(s_i) \subset H_{s_i}$ is a Milnor ball for $(\fgbar)_{s_i}$ (and therefore $\B_\theta(s) \subset H_s$ is a Milnor ball for $f_s$). Then for each $s \in L(\Sigma_i) \backslash \{s_i\}$ we define
$$P_{t,s} := h^{-1}(P_{t,s_i}).$$

\begin{lemma}
For each $s \in L(\Sigma_i) \backslash \{s_i\}$, the polyhedron $P_{t,s}$ defined as above is a L\^e Polyhedron for $(\fgbar)_s$. 
\end{lemma}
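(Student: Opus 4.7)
The plan is to transport the L\^e polyhedron structure from the distinguished fibre at $s_i$ to the nearby fibre at $s$ via the pair-homeomorphisms provided by the local trivializations of the fibre bundles $W_t[i]$ and $W_0[i]$. Concretely, I define the collapsing map at $s$ by conjugation:
\[
\Psi_{t,s} \;:=\; h_{0,s}^{-1} \circ \Psi_{t,s_i} \circ h_{t,s}\;:\;(\fgbar)_s^{-1}(t)\longrightarrow (\fgbar)_s^{-1}(0),
\]
which is a continuous map between the correct pair of fibres, since $h_{t,s}$ and $h_{0,s}$ are homeomorphisms of pairs and $\Psi_{t,s_i}$ is continuous with the correct source and target by hypothesis.

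Next I verify the two clauses of Definition \ref{defi_LP}. For clause (1), deformation retracts are preserved by homeomorphisms: composing a deformation retraction $r_\tau$ of $(\fgbar)_{s_i}^{-1}(t)$ onto $P_{t,s_i}$ with the pair-homeomorphism $h_{t,s}$ gives $h_{t,s}^{-1}\circ r_\tau \circ h_{t,s}$, which is a deformation retraction of $(\fgbar)_s^{-1}(t)$ onto $h_{t,s}^{-1}(P_{t,s_i})=P_{t,s}$. For clause (2), I first note that the pair-homeomorphism $h_{0,s}$ sends the unique singular point $s$ of $(\fgbar)_s^{-1}(0)$ to the corresponding singular point $s_i$ of $(\fgbar)_{s_i}^{-1}(0)$, since these are the only non-manifold points of their respective total spaces. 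Therefore
\[
\Psi_{t,s}^{-1}(s) \;=\; h_{t,s}^{-1}\bigl(\Psi_{t,s_i}^{-1}(h_{0,s}(s))\bigr) \;=\; h_{t,s}^{-1}\bigl(\Psi_{t,s_i}^{-1}(s_i)\bigr) \;=\; h_{t,s}^{-1}(P_{t,s_i}) \;=\; P_{t,s},
\]
and the restriction
\[
\Psi_{t,s}|\;:\;(\fgbar)_s^{-1}(t)\setminus P_{t,s}\longrightarrow (\fgbar)_s^{-1}(0)\setminus\{s\}
\]
is a composition of three homeomorphisms (using that $\Psi_{t,s_i}$ restricts to a homeomorphism on the corresponding complements and that $h_{t,s}$, $h_{0,s}$ carry complements to complements), hence is itself a homeomorphism.

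The one point requiring care is the compatibility of $h_{t,s}$ and $h_{0,s}$: a priori they come from trivializations of two different bundles, $W_t[i]$ and $W_0[i]$. The key observation is that both bundles are cut out inside the same ambient fibre bundle $W[i]\to L(\Sigma_i)$ whose fibre is the disk $\B_\theta(s)$, so one can take $h_{t,s}$ and $h_{0,s}$ to be restrictions to the respective fibres of a single trivialization $W[i]\supset p^{-1}(V_{s_i})\cong V_{s_i}\times \B_\theta(s_i)$ obtained from the local triviality in Proposition \ref{equiv}; this fixes the base point and forces the singular point $s$ of $(\fgbar)_s^{-1}(0)$ to go to $s_i$. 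This compatibility is the only subtle point; once it is in hand, the verification of Definition \ref{defi_LP} is formal, and $P_{t,s}$ is a L\^e polyhedron for $(\fgbar)_s$ with collapsing map $\Psi_{t,s}$.
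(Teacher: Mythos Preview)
Your proof is correct and follows essentially the same route as the paper: define $\Psi_{t,s}:=h_{0,s}^{-1}\circ\Psi_{t,s_i}\circ h_{t,s}$ and transport the deformation retraction by conjugation $\beta_\kappa=h_{t,s}^{-1}\circ\alpha_\kappa\circ h_{t,s}$. You add a useful explicit justification for $h_{0,s}(s)=s_i$ and for the compatibility of $h_{t,s}$ and $h_{0,s}$ via a common trivialization of the ambient disk bundle $W[i]$, points the paper leaves implicit.
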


\begin{proof}
We  show first  that:
$(\fgbar)_s^{-1}(t)$ deformation retracts to $P_{t,s}$. 
Since $(\fgbar)_{s_i}^{-1}(t)$ deformation retracts to $P_{t,s_i}$, there exists a family of maps $\a_\kappa: (\fgbar)_{s_i}^{-1}(t) \to (\fgbar)_{s_i}^{-1}(t)$ in the real parameter $\kappa \in I$, where $I$ denotes the unit interval, such that $\a_0$ is the identity, $\a_1((\fgbar)_{s_i}^{-1}(t)) = P_{t,s_i}$ and the restriction of $\a_\kappa$ to $P_{t,s_i}$ is the identity, for any $\kappa \in I$.
Then for each $\kappa \in I$, define the map $\b_\kappa: (\fgbar)_s^{-1}(t) \to (\fgbar)_s^{-1}(t)$ by setting $\b_\kappa = h^{-1} \circ \a_\kappa \circ h$. This family of maps clearly defines a deformation retraction of $(\fgbar)_s^{-1}(t)$ onto $P_{t,s}$.

Now we prove that there exists a continuous map $\Psi_{t,s}: (\fgbar)_s^{-1}(t) \to (\fgbar)_s^{-1}(0)$ such that $\Psi_{t,s}$ restricts to a homeomorphism from $(\fgbar)_s^{-1}(t) \backslash P_{t,s}$ to $(\fgbar)_s^{-1}(0) \backslash \{s\}$ and such that $\Psi_{t,s}$ sends $P_{t,s}$ to $\{s\}$. 
Setting $\Psi_{t,s} := h_{0,s}^{-1} \circ \Psi_{t,s_i} \circ h_{t,s}$ we clearly obtain the desired map:
$$\xymatrix{ 
(\fgbar)_s^{-1}(t) \ar[r]^{h_{t,s}} \ar[d]_{\Psi_{t,s}} & (\fgbar)_{s_i}^{-1}(t) \ar[d]^{\Psi_{t,s_i}} \\ 
(\fgbar)_s^{-1}(0)	\ar[r]^{h_{0,s}} & (\fgbar)_{s_i}^{-1}(0)
}
$$

\end{proof}

We now set $P_t[i]$ to be the union of the $P_{t,s_i}$ with the union of all the $P_{t,s}$ for $s \in L(\Sigma_i) \backslash \{s_i\}$ defined as above, that is,
$$P_t[i] := \bigcup_{s \in L(\Sigma_i)} P_{t,s}.$$
Then clearly $P_t[i]$ is a fibre bundle over $L(\Sigma_i)$ with fibre $P_{t,s_i}$ and $W_t[i]$ deformation retracts to $P_t[i]$;
$$\xymatrix{ 
P_{t,s_i} \ \ar@{^{(}->}[r] & P_t[i] \ar[d]^{p} \\ 
	&	L(\Sigma_i) 
}
$$
Now define the continuous map $\Psi_t[i]: W_t[i] \to W_0[i]$ by setting 
$$\Psi_t[i](x) := \Psi_{t,p(x)}[i](x).$$
Clearly $\Psi_t[i]$ restricts to a homeomorphism from $W_t[i] \backslash P_t[i]$ to $W_0[i] \backslash L(\Sigma_i)$ and sends $P_t[i]$ to $L(\Sigma_i)$. Thus we arrive to Theorem \ref{theo_degeneration}.

\vskip.5cm


\end{document}